\newcommand{\ignore}[1]{}
\newtheorem{theorem}{Theorem}
\newtheorem*{theorem*}{Theorem}
\newtheorem{lemma}[theorem]{Lemma}
\newtheorem{proposition}[theorem]{Proposition}
\newtheorem{claim}[theorem]{Claim}
\newtheorem{corollary}[theorem]{Corollary}
\newtheorem{example}[theorem]{Example}
\newtheorem{conjecture}[theorem]{Conjecture}
\newtheorem{heuristic}[theorem]{Heuristic}
\newtheorem{maintheorem}{Theorem}
\theoremstyle{definition}
\newtheorem*{definition*}{Definition}
\newtheorem*{lemma*}{Lemma}
\numberwithin{equation}{section}
\numberwithin{theorem}{section}
\newcommand{\R}{\mathbb{R}}
\newcommand{\N}{\mathbb{N}}
\newcommand{\eps}{\varepsilon}
\newcommand{\wh}[1]{\widehat{#1}}
\DeclareMathOperator{\sgn}{sgn}
\DeclareMathOperator{\avg}{avg}
\DeclareMathOperator{\aut}{Aut}
\renewcommand{\P}[1]{{\mathbb{P}}\left[{#1}\right]}
\newcommand{\E}[1]{{\mathbb{E}}\left[{#1}\right]}
\newcommand{\CondE}[2]{{\mathbb{E}}\left[{#1}\middle\vert{#2}\right]}
\newcommand{\Esub}[2]{{\mathbb{E}}_{#1}\left[{#2}\right]}
\newcommand{\ind}[1]{{\mathbbm{1}_{\left\{{#1}\right\}}}}
\newcommand{\Stab}{\textnormal{Stab}}
\newcommand{\Maj}[1]{\textnormal{Maj}_{#1}}
\newcommand{\op}{X}
\newcommand{\opz}[1]{\op(#1)}
\newcommand{\ops}[2]{\op_{#2}(#1)}
\newcommand{\mean}[1]{\mu_{#1}}
\newcommand{\neigh}[1]{\partial(#1)}
\newcommand{\wei}{w}
\newcommand{\rgi}{\mathcal{G}_\bullet}
\newcommand{\gnp}[2]{G(#1,#2)}
\newcommand{\rrg}[2]{R(#1,#2)}
\newcommand{\normal}[2]{N(#1,#2)}
\begin{document}

\title[Convergence, unanimity and disagreement]{Convergence, unanimity
  and disagreement in majority dynamics on unimodular graphs and
  random graphs}

\author[I.\ Benjamini]{Itai Benjamini}
\address[I.\ Benjamini]{Faculty of Mathematics and Computer Science, Weizmann Institute of Science.}
\author[S.\ Chan]{Siu-On Chan}
\author[R.\ O'Donnell]{Ryan O'Donnell}
\address[R.\ O'Donnell]{Department of Computer Science, Carnegie Mellon University.}
\author[O.\ Tamuz]{Omer Tamuz}
\address[S.\ Chan, O.\ Tamuz]{Microsoft Research New England.}
\author[L.\ Tan]{Li-Yang Tan}
\address[L.\ Tan]{Department of Computer Science, Columbia University.}

\thanks{R.\ O'Donnell is supported by NSF grants CCF-1319743 and
  CCF-1116594.}

\date{\today}

\begin{abstract}
  In majority dynamics, agents located at the vertices of an undirected
  simple graph update their binary opinions synchronously by adopting
  those of the majority of their neighbors.

  On infinite unimodular transitive graphs (e.g., Cayley graphs), when
  initial opinions are chosen from a distribution that is invariant
  with respect to the graph automorphism group, we show that the
  opinion of each agent almost surely either converges, or else
  eventually oscillates with period two; this is known to hold for
  finite graphs, but not for all infinite graphs.

  On Erd\H{o}s-R\'enyi random graphs with degrees $\Omega(\sqrt{n})$,
  we show that when initial opinions are chosen i.i.d.\ then agents
  all converge to the initial majority opinion, with constant
  probability. Conversely, on random 4-regular finite graphs, we show
  that with high probability different agents converge to different
  opinions.
\end{abstract}

\maketitle
\tableofcontents

\section{Introduction}
Let $G=(V,E)$ be a finite or countably infinite, locally finite,
undirected simple graph. Consider time periods $t \in
\{0,1,2,\ldots\}$ and, for each time $t$ and $i \in V$, let $\ops{i}{t}
\in \{-1,+1\}$ be the opinion of vertex $i$ at time $t$.

We define {\em majority dynamics} by
\begin{align}
  \label{eq:majority-dynamics-def}
  \ops{i}{t+1} = \sgn\sum_{j \in \neigh{i}}\ops{j}{t},
\end{align}
where $\neigh{i}$ is the set of neighbors of $i$ in $G$. To resolve
(or avoid) ties, we either add or remove $i$ from $\neigh{i}$ so that
$|\neigh{i}|$ is odd. This ensures that the sum in the r.h.s.\
of~\eqref{eq:majority-dynamics-def} is never zero. Equivalently, we
let ties be broken by reverting to the agent's existing opinion.

A well known result is the {\em period two property} of finite graphs,
due to Goles and Olivos~\cite{goles1980periodic}.
\begin{theorem}[Goles and Olivos]
  For every finite graph $G=(V,E)$, initial opinions
  $\{\ops{i}{0}\}_{i \in V}$ and vertex $i$ it holds that
  $\ops{i}{t+2} = \ops{i}{t}$ for all sufficiently large $t$.
\end{theorem}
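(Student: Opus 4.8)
The plan is to exhibit a Lyapunov (``energy'') functional on pairs of consecutive configurations that is monotone along the orbit, takes only finitely many values, and whose stabilization is equivalent to the period-two conclusion. Because the update is synchronous, a one-step energy does not decrease; the key is to couple two successive times. To set up, write $a_{ij}=1$ when $j\in\neigh{i}$ and $a_{ij}=0$ otherwise. The only effect of the odd-degree tie-breaking convention is to add or delete the diagonal entry $a_{ii}$, so the matrix $(a_{ij})$ stays symmetric, $a_{ij}=a_{ji}$. Put $S_i(t)=\sum_j a_{ij}\ops{j}{t}$; since $|\neigh{i}|$ is odd this is a sum of an odd number of $\pm1$'s, hence nonzero, and $\ops{i}{t+1}=\sgn S_i(t)$. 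Consequently $\ops{i}{t+1}\,S_i(t)=|S_i(t)|$, while $\ops{i}{t-1}\,S_i(t)\le|S_i(t)|$ because $\ops{i}{t-1}\in\{-1,+1\}$.

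Next I would define the potential
\[
  \Phi(t)=-\sum_{i,j} a_{ij}\,\ops{i}{t+1}\,\ops{j}{t},
\]
compute $\Phi(t)-\Phi(t-1)$, and relabel indices in the lagged term using the symmetry $a_{ij}=a_{ji}$. The difference then telescopes to
\[
  \Phi(t)-\Phi(t-1)=-\sum_i\bigl(\ops{i}{t+1}-\ops{i}{t-1}\bigr)\,S_i(t).
\]
By the two inequalities above, each summand equals $|S_i(t)|-\ops{i}{t-1}\,S_i(t)\ge 0$, so $\Phi$ is nonincreasing in $t$.

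Since the state space $\{-1,+1\}^V$ is finite, $\Phi$ takes only finitely many values, and a nonincreasing sequence in a finite set is eventually constant. For every $t$ large enough that $\Phi(t)=\Phi(t-1)$, the sum of nonnegative terms above vanishes, hence each term vanishes: $\ops{i}{t-1}\,S_i(t)=|S_i(t)|>0$, which forces $\ops{i}{t-1}=\sgn S_i(t)=\ops{i}{t+1}$ for every $i$. Reindexing, this says $\ops{i}{t+2}=\ops{i}{t}$ for all sufficiently large $t$, which is exactly the claim.

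I expect the main obstacle to be conceptual rather than computational: guessing the correct two-time energy $\Phi$ and recognizing that its monotonicity rests entirely on the symmetry $a_{ij}=a_{ji}$ (the analogous statement genuinely fails on directed graphs) together with the strict inequality $S_i(t)\neq 0$ guaranteed by the odd-degree convention, which is what upgrades ``$\Phi$ eventually constant'' to the termwise equalities. Once $\Phi$ is identified, both the monotonicity computation and the equality analysis are short.
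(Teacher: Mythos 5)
Your proof is correct and takes essentially the same route as the paper: your two-time potential $\Phi(t)=-\sum_{i,j}a_{ij}\,\ops{i}{t+1}\,\ops{j}{t}$ is, up to an additive constant, exactly the finite-graph specialization of the potential $\ell_t$ used in the paper's proof of Theorem~\ref{thm:p2p-gen} (which the paper notes follows the Goles--Olivos argument), with the symmetry $a_{ij}=a_{ji}$ playing the role there taken by the mass-transport identity~\eqref{eq:mtp2}, and finiteness of the state space replacing the $(\eps,W)$-regularity bound. No gaps to report.
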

That is, every agent's opinion eventually converges, or else
enters a cycle of length two. 

This theorem also holds for some infinite
graphs~\cite{moran1995period, ginosar2000majority}; in particular for
those of bounded degree and subexponential growth, or slow enough
exponential growth. In~\cite{tamuz2013majority} it is furthermore
shown that on graphs of maximum degree $d$ the number of times $t$ for
which $\ops{i}{t+2} \neq \ops{i}{t}$ is at most
$$\frac{d+1}{d-1} \cdot d \cdot
\sum_{r=0}^\infty\left(\frac{d+1}{d-1}\right)^{-r}n_r(G,i),$$
where $n_r(G,i)$ is the number of vertices at graph distance $r$ from
$i$ in~$G$.

However, on some infinite graphs there exist initial configurations of
the opinions such that no agent's opinion converges to any period;
this is easy to construct on regular trees. A natural question is
whether such configurations are ``rare'', in the sense that they
appear with probability zero for some natural probability distribution
on the initial configurations. In~\cite{kanoria2011majority} it was
shown that on a regular trees, when initial opinions are chosen
i.i.d.\ with sufficient bias towards $+1$, then all opinions converge
to $+1$ with probability one. It was shown also that this is not the
case in some odd degree regular trees, when the bias is sufficiently
small. However, the question of whether opinions converge at all when
the bias is small was not addressed.

We show that indeed opinions almost surely converge (or enter a cycle
with period two) on regular trees, whenever the initial configuration
is chosen i.i.d. In fact, we prove a much more general result.

A {\em graph isomorphism} between graphs $G=(V,E)$ and $G'=(V',E')$
is a bijection $h : V \to V'$ such that $(i,j) \in E$ iff
$(h(i),h(j)) \in E'$. Intuitively, two graphs are isomorphic if they
are equal, up to a renaming of the vertices.

The {\em automorphism group} $\aut(G)$ is the set of isomorphisms from
$G$ to $G$, equipped with the operation of composition. $G$ is said to
be {\em transitive} if $\aut(G)$ acts transitively on $V$. That is, if
there is a single orbit $V/G$, or, equivalently, if for every $i,j \in
V$ there exists an $h \in \aut(G)$ such that $h(i)=j$.  $G$ is said to
be {\em unimodular} if $\aut(G)$ is unimodular (see, e.g., Aldous and
Lyons~\cite{aldous2007processes})\footnote{See~\cite{aldous2007processes}
  for an example (the ``grandfather graph'') of a transitive graph
  that is not unimodular.}. $G$ is unimodular if and only i the
following ``mass transport principle'' holds: informally, in every
flow on the graph that is invariant to $\aut(G)$, the sum of what
flows into a node is equal to the sum of what flows out. Formally, for
every $F \colon V \times V \to \R^+$ that is invariant with respect to
the diagonal action of $\aut(G)$ it holds that
\begin{align*}
  \sum_{j \in \neigh{i}}f(i,j) = \sum_{j \in \neigh{i}}f(j,i),
\end{align*}
where $i \in V$ is arbitrary.

Many natural infinite trasitive graphs are unimodular. These include
all Cayley graphs, all transitive amenable graphs, and, for example,
transitive planar graphs with one end~\cite{lyons-book}.

Our first result is the following.
\begin{maintheorem}[The almost sure period two property for unimodular
  transitive graphs]
  \label{thm:p2p-unimodular}
  Let $G$ be a unimodular transitive graph, and let the agents'
  initial opinions $\{\ops{i}{0}\}_{i \in V}$ be chosen from a
  distribution that is $\aut(G)$-invariant. Then, under majority
  dynamics,
  \begin{align}
    \label{eq:a.s.p2p}
    \P{\lim_t \ops{i}{t+2}-\ops{i}{t} = 0} = 1,
  \end{align}
  and furthermore
  \begin{align*}
    \E{\#\{t\,:\,\ops{i}{t+2} \neq \ops{i}{t}\}} \leq 2d,
  \end{align*}
  where $d$ is the degree of $G$.
\end{maintheorem}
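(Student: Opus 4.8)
The plan is to adapt the classical Goles--Olivos energy argument to the infinite setting, replacing the global (divergent) energy sum by a per-vertex energy whose total decrease is controlled using unimodularity. Recall that on a finite graph the function $E(t) = -\sum_{i}\ops{i}{t}\sum_{j\in\neigh i}\ops{j}{t-1}$ is non-increasing, with decrement exactly $\sum_i(\ops{i}{t+1}-\ops{i}{t-1})\sum_{j\in\neigh i}\ops{j}{t}\ge 0$: each summand is nonnegative because $\ops{i}{t+1}=\sgn\sum_{j\in\neigh i}\ops{j}{t}$, so the factor multiplying $\sum_{j}\ops{j}{t}$ has the same sign as $\sum_j\ops j t$ or is zero. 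Monotonicity here relies on the symmetry $\sum_{i}\ops{i}{t}\sum_{j\in\neigh i}\ops{j}{t-1}=\sum_{i}\ops{i}{t-1}\sum_{j\in\neigh i}\ops{j}{t}$, which comes from the adjacency being symmetric. On an infinite graph $E(t)$ is typically infinite, so I would instead track the expected energy per site and obtain the symmetry from the mass transport principle.

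First I would record that the whole space--time process $\{\ops{i}{t}\}_{i\in V,\,t\ge 0}$ is $\aut(G)$-invariant: the initial law is invariant by hypothesis, and the update~\eqref{eq:majority-dynamics-def} is defined purely from the neighborhood structure, hence $\aut(G)$-equivariant. Consequently $f(i,j):=\E{\ops{i}{t}\ops{j}{t-1}}$ is a diagonally $\aut(G)$-invariant function; since the degree is finite and $|f|\le 1$, applying the principle (stated in the excerpt for $\R^+$) to the shifted nonnegative function $f+1$ and cancelling the equal constant terms on both sides yields it for the signed $f$. This gives the crucial per-site identity
\[
  \E{\ops{i}{t}\textstyle\sum_{j\in\neigh i}\ops{j}{t-1}}
  \;=\;\E{\ops{i}{t-1}\textstyle\sum_{j\in\neigh i}\ops{j}{t}},
\]
the exact analogue of the finite symmetry above. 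I would then define the per-site energy $\varepsilon(t):=-\E{\ops{i}{t}\sum_{j\in\neigh i}\ops{j}{t-1}}$, which by transitivity is independent of $i$ and satisfies $|\varepsilon(t)|\le D$, where $D:=|\neigh i|$ is the odd effective degree (so $D\le d+1$). Using the identity I would compute $\varepsilon(t+1)-\varepsilon(t) = -\E{(\ops{i}{t+1}-\ops{i}{t-1})\sum_{j\in\neigh i}\ops{j}{t}}=:-\delta(t)$ with $\delta(t)\ge 0$ by the sign property of the update. Telescoping and using boundedness gives $\sum_{t\ge 1}\delta(t)=\varepsilon(1)-\lim_T\varepsilon(T)\le 2D$.

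To convert this into the statement I would use integrality of the fields: whenever $\ops{i}{t+1}\neq\ops{i}{t-1}$ the nonzero integer $\sum_{j\in\neigh i}\ops{j}{t}$ forces $(\ops{i}{t+1}-\ops{i}{t-1})\sum_{j\in\neigh i}\ops{j}{t}=2\bigl|\sum_{j\in\neigh i}\ops{j}{t}\bigr|\ge 2$, so $\delta(t)\ge 2\,\P{\ops{i}{t+1}\neq\ops{i}{t-1}}$. Re-indexing $s=t-1$ and summing, $\sum_{s\ge 0}\P{\ops{i}{s+2}\neq\ops{i}{s}}\le\frac12\sum_{t\ge1}\delta(t)\le D\le 2d$. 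By Tonelli, $\E{\#\{s:\ops{i}{s+2}\neq\ops{i}{s}\}}=\sum_{s\ge 0}\P{\ops{i}{s+2}\neq\ops{i}{s}}\le 2d$, giving the quantitative bound; and finiteness of this expectation forces $\#\{s:\ops{i}{s+2}\neq\ops{i}{s}\}<\infty$ almost surely, i.e.\ $\lim_t(\ops{i}{t+2}-\ops{i}{t})=0$ a.s., which is~\eqref{eq:a.s.p2p}.

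I expect the main obstacle to be the localization step itself: the finite proof's monotonicity rests on symmetry of the adjacency, and its infinite replacement is precisely the mass transport identity, which holds because $G$ is \emph{unimodular}---this is exactly where transitivity alone would fail (cf.\ the grandfather graph). The care needed is in verifying that $f$ is genuinely diagonally invariant (using invariance of the full process, not merely the initial condition) and that the signed version of the principle applies, both of which are routine given bounded degree, and in checking that the per-site energy telescopes to a finite total even though the global energy $E(t)$ is infinite.
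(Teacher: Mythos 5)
Your proposal is correct and follows essentially the same route as the paper: the paper proves a more general statement for unimodular random networks (Theorem~\ref{thm:p2p-gen}) using the mass transport principle to obtain exactly your symmetry identity, a potential $\ell_t = \frac14\E{\sum_{j\in\neigh{I}}\wei(I,j)(\ops{I}{t+1}-\ops{j}{t})^2}$ that is an affine transform of your per-site energy $\varepsilon(t)$, the same telescoping bound on the decrements, the same integrality (ellipticity) lower bound on each decrement, and Borel--Cantelli to conclude. The only differences are cosmetic: the paper's potential is nonnegative by construction so it telescopes against $\ell_1 \le W$ rather than against $|\varepsilon|\le D$, and the paper works with weighted dynamics on random rooted networks rather than a fixed transitive graph.
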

That is, each node's opinion almost surely converges to a cycle of
period at most two. 

In fact, this result is a special case of our
Theorem~\ref{thm:p2p-gen} below, which applies to {\em unimodular
  random networks}. These include many natural random graphs such as
invariant percolation clusters, uniform infinite planar
triangulations~\cite{angel2003uniform} and any limit of finite graphs,
in the sense of~\cite{benjamini2001recurrence}; see
Section~\ref{sec:asp2p} for a formal definition. In fact, this is such
a large family that one may guess that any graph has what we call the
{\em almost sure period two property}: if initial opinions are chosen
i.i.d.\ from the uniform distribution over $\{-1,-1\}$, then each
node's opinion almost surely converges to a cycle of period at most
two.  This, however, is not true, as we show in the next example.
\begin{example}
  \label{thm:not-on-all-infinite-graph}
  There exists an infinite graph $G$ that does not have the almost
  sure period two property.
\end{example}
As a reading of the details of this example will reveal, this graph is
not of bounded degree. We conjecture that
\begin{conjecture}
  Every bounded degree graph has the almost sure period two property.
\end{conjecture}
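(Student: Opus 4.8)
The statement is a conjecture rather than an established theorem, so what follows is a strategy for attacking it and an honest account of where it gets stuck, rather than a complete argument. The plan is to push the energy (Lyapunov) method behind the Goles--Olivos theorem and our Theorem~\ref{thm:p2p-unimodular} as far as it will go on an arbitrary bounded-degree graph, isolating the one estimate that unimodularity supplies for free and that would have to be replaced by a purely probabilistic input. Fix a root $i$. Since a nonnegative integer-valued random variable with finite expectation is almost surely finite, it suffices to bound $\E{\#\{t : \ops{i}{t+2} \neq \ops{i}{t}\}}$. Throughout write $S_t(j) = \sum_{k \in \neigh{j}} \ops{k}{t}$, so that $\ops{j}{t+1} = \sgn S_t(j)$ and $|S_t(j)| \geq 1$ by the tie-breaking convention.

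First I would record a \emph{localizable} form of the Goles--Olivos energy. Take the global potential $\Phi(t) = -\sum_j \ops{j}{t-1} S_t(j)$, a sum over the ordered incidences of each edge; a direct computation gives $\Phi(t) - \Phi(t+1) = \sum_j \delta_t(j)$, where $\delta_t(j) = S_t(j)\bigl(\ops{j}{t+1} - \ops{j}{t-1}\bigr) = 2|S_t(j)|\,\ind{\ops{j}{t+1} \neq \ops{j}{t-1}} \geq 0$. Thus $\delta_t(j)$ is twice a weighted indicator that $j$ oscillates between times $t-1$ and $t+1$, and $\half\sum_t \delta_t(j)$ dominates the number of oscillations at $j$. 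Introducing the per-vertex potential $\phi_t(j) = -\ops{j}{t-1} S_t(j)$, which satisfies $\sum_j \phi_t(j) = \Phi(t)$ and $|\phi_t(j)| \leq d$, one finds that the per-vertex telescoping is exact only up to a correction term: $\delta_t(j) = \bigl(\phi_t(j) - \phi_{t+1}(j)\bigr) + g_t(j)$, with $g_t(j) = S_t(j)\ops{j}{t+1} - S_{t+1}(j)\ops{j}{t}$. The term $g_t(j)$ is a discrete divergence: setting $h_t(j,k) = \ops{k}{t}\ops{j}{t+1} - \ops{k}{t+1}\ops{j}{t}$ one checks $h_t(j,k) = -h_t(k,j)$ and $g_t(j) = \sum_{k \in \neigh{j}} h_t(j,k)$, so $h_t$ is an antisymmetric current whose divergence is $g_t$, and in particular $\sum_j g_t(j) = 0$. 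Summing the corrected telescoping over $t$ and using $|\phi_t(j)| \leq d$ yields $\half\sum_t \delta_t(j) \leq d + \half\sum_t g_t(j)$, so the whole problem reduces to the single estimate $\sum_{t \geq 0} \E{g_t(j)} < \infty$: the expected net energy flux into a fixed vertex, accumulated over all time, must be finite.

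This is exactly the point at which unimodularity was used. In the transitive, $\aut(G)$-invariant setting of Theorem~\ref{thm:p2p-unimodular}, the mass transport principle applied to the invariant current $\E{h_t(j,k)}$ forces $\E{g_t(j)} = 0$ for every $j$ and $t$, and the bound $\E{\#\{\ldots\}} \leq 2d$ falls out at once. For a general bounded-degree graph no such cancellation is available: $g_t$ still integrates to zero over the whole (infinite) vertex set, but the flux into an individual vertex can be persistently positive, compensated by deficits elsewhere. The plan for the conjecture is therefore to control $\sum_t \E{g_t(j)}$ using the i.i.d.\ uniform initialization in place of symmetry. Concretely, I would try to combine (i) a correlation/flux-decay estimate showing that $\E{h_t(j,k)}$ is small once $t$ is large, exploiting the global sign-flip symmetry of the uniform initial law, with (ii) finite speed of propagation—the trajectory near $j$ up to time $T$ depends only on the initial opinions in the ball $B_T(i)$—by telescoping the divergence $g_t$ over a growing sequence of balls, so that the accumulated flux into $j$ is dominated by a convergent sum of boundary currents.

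The main obstacle is precisely the correlation-decay input in step (i). Majority dynamics is a deterministic threshold rule possessing neither the monotonicity nor the FKG-type positive associations that drive correlation-decay proofs for, say, the Ising model, and the tree analysis of~\cite{kanoria2011majority} warns that long-range influence can survive once the configuration is biased; controlling $\E{h_t(j,k)}$ uniformly enough in $t$ to make $\sum_t \E{g_t(j)}$ converge is the crux, and its absence is why the statement is only a conjecture. A more modest first target, likely tractable, would be to establish the required flux bound for graphs of subexponential or mildly exponential growth, thereby recovering and extending the deterministic bound of~\cite{tamuz2013majority}: one would show that the probabilistic cancellation in $\E{h_t(j,k)}$ beats the volume growth $n_r(G,i)$ even in regimes where the deterministic weighting $\bigl(\tfrac{d-1}{d+1}\bigr)^r$ does not, and then attempt to remove the growth hypothesis entirely by a renormalization of the flux along edges.
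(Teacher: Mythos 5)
You have not proved the statement, and you are candid about that; for the record, neither does the paper---this statement is an \emph{open conjecture} there, posed immediately after Example~\ref{thm:not-on-all-infinite-graph}, so there is no proof of record to compare your attempt against. What can be judged is your reduction, and it is correct. The telescoping identity $\delta_t(j) = \bigl(\phi_t(j)-\phi_{t+1}(j)\bigr) + g_t(j)$ with $g_t(j) = S_t(j)\ops{j}{t+1} - S_{t+1}(j)\ops{j}{t}$ checks out; $h_t(j,k)=\ops{k}{t}\ops{j}{t+1}-\ops{k}{t+1}\ops{j}{t}$ is indeed antisymmetric with divergence $g_t$; and since $|\phi_t(j)|\leq d$ while each oscillation contributes at least $2$ to $\delta_t(j)$, the problem does reduce to showing $\sum_t \E{g_t(j)}<\infty$. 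Your account of the unimodular case is also faithful to the paper's actual proof of Theorem~\ref{thm:p2p-gen}: the mass-transport function $f$ used there is, up to the additive constant and edge weight that make it nonnegative, one of the two terms of your $h_t$, and equation~\eqref{eq:mtp2} is precisely the statement $\E{g_t(I)}=0$. So you have correctly isolated the single point where unimodularity enters.

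The genuine gap is the one you name---step (i)---but it is worse than merely ``missing'': the specific mechanism you propose cannot work. The quantity $h_t(j,k)$ is a difference of products of \emph{two} opinions, and majority dynamics commutes with the global sign flip of the initial configuration; hence $h_t(j,k)$ is invariant under that flip, and the sign-flip symmetry of the i.i.d.\ uniform initial law yields no information whatsoever about $\E{h_t(j,k)}$. Any real proof needs a different source of cancellation or decay, and none is suggested. Moreover, the paper's Example~\ref{thm:not-on-all-infinite-graph} calibrates the difficulty: that graph is locally finite, so your per-vertex reduction applies to its root with $d$ replaced by the root's degree, and since the root there almost surely oscillates forever, the accumulated flux $\sum_t \E{g_t(j)}$ genuinely diverges---information streams in from infinity at all times. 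A successful argument must therefore use the bounded-degree hypothesis in an essential, quantitative way to kill the flux, not merely as a normalization of $\phi_t$; no candidate for such an estimate appears in the proposal. The conjecture remains open, and your submission should be read as a correct reduction plus a research program, not as a proof.
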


We next consider the process of majority dynamics on a random finite
graph, where initial opinions $\{\ops{i}{0}\}_{i \in V}$ are chosen
i.i.d.\ from the uniform distribution over $\{-1,+1\}$. Here
convergence to period two is guaranteed by the Goles-Olivos
Theorem. The question we tackle is whether agents all converge to the
{\em same} opinion.

The Erd\H{o}s-R\'enyi graph $\gnp{n}{p}$ is the distribution over graphs
with $n$ vertices in which each edge exists independently with
probability $p$. A random regular graph $\rrg{n}{d}$ is the uniform
distribution over all $d$-regular connected graphs with $n$ vertices.

We first study $\gnp{n}{p_n}$, where $p_n = \Omega(\sqrt{n})$. Following
the usual convention, we say that an event happens {\em with high
  probability} when it happens with probability that tends to one as
$n$ tends to infinity.  Let $\mean{0} = \avg_{i \in V}
\{\ops{i}{0}\}$.
\begin{maintheorem}[Unanimity on high degree Erd\H{o}s-R\'enyi graphs]
  \label{thm:gnp-agreement}
  Assume $n \geq n_0$ and $p \geq cn^{-1/2}$, where $n_0, c > 0$ are
  sufficiently large universal constants. Then with probability at
  least~$.4$ over the choice of $G \sim \gnp{n}{p}$ and the initial
  opinions, the vertices unanimously hold opinion $\sgn(\mean{0})$ at
  time~$4$.
\end{maintheorem}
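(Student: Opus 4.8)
The plan is to control the evolution of the global bias $S_t := \sum_i \ops i t$ and to show that, after conditioning on a constant-probability event for the initial opinions and on a high-probability ``pseudorandomness'' event for the graph, the bias is amplified at each round until the configuration becomes unanimous by time $4$. Majority dynamics commutes with the global sign flip $\op\mapsto-\op$, and the i.i.d.\ uniform initialization is invariant under it, so it suffices to condition on $S_0\ge c_0\sqrt n$ and prove that every vertex holds opinion $+1$ at time $4$ with conditional probability $1-o(1)$; the event $S_0\le -c_0\sqrt n$ is symmetric and yields unanimity at $-1$, so in either case the common value is $\sgn(\mean0)$.

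I would first record two preliminaries. The first is anticoncentration of the initial bias: $S_0$ is a sum of $n$ i.i.d.\ signs, so $\P{|S_0|\ge c_0\sqrt n}\to 2\,\P{\normal01\ge c_0}$, which exceeds $0.4$ once $c_0$ is small enough (e.g.\ $c_0=0.8$ gives about $0.42$). The second is a good-graph event $\mathcal G$, holding with probability $1-o(1)$ for $\gnp np$ when $np\ge c\sqrt n\gg\log n$, on which every degree is $np(1\pm o(1))$ and the centered adjacency matrix obeys the spectral bound $\|A-p(J-I)\|\le C\sqrt{np}$ for a universal $C$. Running the whole argument on $\mathcal G$ is the device that dissolves the otherwise fatal dependence between the opinion vector $\op(t)$ and the edges: once the graph is fixed and pseudorandom, each later step is a \emph{deterministic} map $x\mapsto \sgn(Ax)$, so no fresh-randomness heuristic is needed.

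The crux—and the main obstacle—is the first step $0\to1$, where the signal is weakest: the neighbour-sum $Z_i=\sum_{j\in\neigh i}\ops j0$ has conditional mean $\Theta(pS_0)\gtrsim 1$ but standard deviation $\Theta(\sqrt{np})=\Omega(n^{1/4})$, so spectral bounds are vacuous and a naive Berry--Esseen estimate of $\CondE{\sgn Z_i}{\{\ops j0\}_j}$ has error of the same order $(np)^{-1/2}$ as the quantity itself. I would bypass this by a coupling. Writing $Z_i=B_+-B_-$ with $B_+\sim\mathrm{Bin}(N_+,p)$, $B_-\sim\mathrm{Bin}(N_-,p)$ independent (the plus/minus neighbour counts) and splitting off the surplus, $Z_i = D + B''$ where $D$ is symmetric about $0$ and $B''\sim\mathrm{Bin}(S_0,p)\ge0$ is independent; hence $\CondE{\sgn Z_i}{\cdot}\ge0$, and a one-sided local-limit lower bound on the mass of $D$ near $0$ gives $\CondE{\sgn Z_i}{\cdot}\gtrsim pS_0/\sqrt{np}$, so $\CondE{S_1}{\cdot}\gtrsim S_0\sqrt{pn}$—the bias is amplified by the factor $\sqrt{pn}\gg1$. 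For concentration I would use that for $i\ne k$ the sums $Z_i,Z_k$ are conditionally independent given their single shared edge $A_{ik}$, and flipping $A_{ik}$ moves each sign-expectation by at most $\P{|Z_i|\le1}=O((np)^{-1/2})$; thus $|\Cov(\sgn Z_i,\sgn Z_k)|\le p\cdot O((np)^{-1})=O(1/n)$, giving $\mathrm{Var}(S_1)=O(n)=o((\E{S_1})^2)$ and hence $S_1\ge a_1\,n\sqrt p$ with conditional probability $1-o(1)$.

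The remaining rounds are deterministic on $\mathcal G$. With $x=\op(t)$ and $\tilde A=A-p(J-I)$, a minus vertex at time $t+1$ forces $|(\tilde A x)_i|\ge pS_t-p$, so the number $B_{t+1}$ of minus vertices satisfies $B_{t+1}\le 4\|\tilde A x\|_2^2/(pS_t)^2\le 4C^2n^2/(pS_t^2)$. Feeding in $S_1\ge a_1 n\sqrt p$ gives $B_2\le (4C^2/a_1^2)\,p^{-2}\le O(n/c^2)$, a small constant fraction once $c$ is large, whence $S_2\ge(1-O(1/c^2))n$ and then $B_3=O(1/p)=O(\sqrt n)$. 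The final step $3\to4$ must reach \emph{exact} unanimity, which the spectral bound cannot (it always leaves $\Theta(\sqrt n)$ slack at this density), so I would close with a union bound over candidate minus-sets: a vertex is minus at time $4$ only if at least $\tfrac12\deg_i\approx\tfrac c2\sqrt n$ of its neighbours lie in the minus-set $M_3$, and since $|M_3|\le K\sqrt n$ with $K=O(1/c)$, a binomial large-deviation estimate bounds this by $e^{-\frac c4\sqrt n\log n\,(1-o(1))}$ for each fixed candidate set, while the entropy of all such sets is only $e^{\frac K2\sqrt n\log n}$; the two exponents cancel favourably once $c>2K$, i.e.\ for $c$ a large enough universal constant. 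Combining the constant-probability initial event with the $1-o(1)$ success of every subsequent step yields unanimity at $\sgn(\mean0)$ by time $4$ with probability at least $2\,\P{\normal01\ge c_0}-o(1)\ge 0.4$.
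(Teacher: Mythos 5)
Your proposal is correct in outline and differs from the paper's proof precisely at the one delicate step, the passage from time $0$ to time $1$. The paper works unconditionally: it shows $\E{\sgn(\mean{0})\mean{1}} \geq \frac{2}{\pi}\sqrt{p} - \frac{1}{n\sqrt p}$ by Fourier analysis of majority functions (Parseval plus the fact that $\wh{\Maj{n}}(S)\,\wh{\Maj{d'}}(S)\ge 0$ for every $S$), bounds $\E{\mean{1}^2} \leq p + O(1/(pn))$ via Cauchy--Schwarz and the noise stability of majority, and then applies the one-sided Chebyshev inequality to get $\P{\sgn(\mean{0})\mean{1} \gtrsim \sqrt{p}} \geq .4004$; this is where the constant $.4$ comes from, and the method is intrinsically capped at $2/\pi$. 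You instead condition on the constant-probability event $|S_0|\ge c_0\sqrt n$ and prove a conditional law of large numbers for $S_1$: the symmetric-part-plus-surplus coupling gives $\E{S_1}\gtrsim S_0\sqrt{np}$, and the single-shared-edge argument gives $\Cov(\sgn Z_i,\sgn Z_k)=O(1/n)$, hence $\mathrm{Var}(S_1)=O(n)=o(\E{S_1}^2)$; both estimates check out. This route is genuinely different and in fact stronger: letting $c_0\to 0$ slowly it would yield success probability $1-o(1)$ rather than $.4$, which is what later work on this problem does. From time $1$ onward the two arguments coincide, both invoking Vu's spectral bound $\|A-p(J-I)\|=O(\sqrt{np})$ to shrink the minority to $O(1/p^2)$ at time $2$ and $O(1/p)$ at time $3$.

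Two small corrections. Your closing union bound over candidate minus-sets is unnecessary, and the exponent $e^{-\frac{c}{4}\sqrt n\log n}$ you quote is only correct at the bottom of the density range $p\asymp n^{-1/2}$; the paper's deterministic pigeonhole suffices at every density: with high probability every degree is at least $pn/2$, and since $|M_3|\le O(1/p) < pn/4$ once $p\ge cn^{-1/2}$ with $c$ large, no vertex can have half of its neighbours in $M_3$, so the minority vanishes at time $4$ with certainty on the good event. Also, carry the tie-breaking convention (adding or removing $i$ from $\neigh{i}$ to make the degree odd) through the coupling $Z_i=D+B''$; it perturbs $Z_i$ by at most $1$ and costs nothing, but should be stated.
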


Next, we consider $\rrg{n}{d}$, with $d=4$. In this setting we prove
the following result. We say that {\em unanimity} is reached at time
$t$ when $\ops{i}{t} = \ops{j}{t}$ for all $i,j \in V$.
\begin{maintheorem}[Disagreement on random regular low degree graphs]
  \label{thm:disagreement}
  Let $G_n$ be drawn from $\rrg{n}{4}$, or be any sequence of
  $4$-regular expanders with growing girth. Choose the initial
  opinions independently with probability $1/3 < p < 2/3$. Then, with
  high probability, unanimity is not reached at any time.
\end{maintheorem}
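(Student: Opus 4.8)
The plan is to reduce the statement to the persistence of both opinions and then to exhibit frozen monochromatic structures. First note that the all-$+1$ and all-$-1$ configurations are fixed points (a vertex surrounded by four agreeing neighbors, together with itself, sees five equal values and never changes) and that they are absorbing. Hence unanimity is reached at some time if and only if the trajectory converges to one of these two fixed points, so it suffices to show that with high probability both opinions are present at \emph{every} time. The basic mechanism is a freezing lemma: if a vertex $i$ has at least two neighbors of its own opinion that themselves never change, then, using the self-loop tie-break so that $i$ takes the majority of the five values $\ops{i}{t}$ and its four neighbors, $i$ also never changes. Two consequences: any \emph{monochromatic cycle} is frozen, since each of its vertices has two cycle-neighbors of the same color; and on the infinite $4$-regular tree $T_4$ any \emph{bi-infinite monochromatic self-avoiding path} is frozen, for the same reason.

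For $G_n \sim \rrg{n}{4}$ I would use the cycle version directly. The number of cycles of length $\ell$ in $\rrg{n}{4}$ converges to independent Poisson variables with means $\lambda_\ell \sim 3^\ell/(2\ell)$, and, since the labels are independent of the graph, each length-$\ell$ cycle is monochromatically $+1$ with probability $p^\ell$. Thus the number of monochromatic-$+1$ cycles of length at most $L$ is asymptotically Poisson with mean $\sum_{\ell \le L}\lambda_\ell p^\ell \approx \tfrac12\sum_{\ell\le L}(3p)^\ell/\ell$, which diverges as $L\to\infty$ because $3p>1$. Choosing $L$ large therefore makes a monochromatic-$+1$ cycle present with high probability; as $3(1-p)>1$ as well, the same holds for $-1$. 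These cycles freeze, both opinions survive for all time, and unanimity is never reached.

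For a sequence of $4$-regular expanders with girth tending to infinity there are no short cycles, so the cycle argument is unavailable and I would argue statistically. Both ensembles converge in the local weak (Benjamini--Schramm) sense to $T_4$ with i.i.d.\ $p$-biased labels. On $T_4$ the $+1$ sites percolate iff $p$ exceeds the site threshold $1/b=1/3$ (branching number $b=3$), and the $-1$ sites percolate iff $1-p>1/3$; both hold precisely when $1/3<p<2/3$. An infinite monochromatic cluster in a tree is an infinite subtree, and above threshold the origin lies, with positive probability, on a bi-infinite monochromatic self-avoiding path of each color; by the freezing lemma such a path never changes, so $\P{\ops{o}{t}=c \text{ for all } t}>0$ for both $c\in\{-1,+1\}$. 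Since $\ops{v}{t}$ depends only on the radius-$t$ ball, changing one initial label has bounded influence, so McDiarmid's inequality plus local convergence gives, for every \emph{fixed} $t$, that a $\Theta(n)$ fraction of vertices hold each opinion with high probability.

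The main obstacle is upgrading this from fixed times to \emph{all} times: local weak convergence controls the configuration only up to any fixed $t$, while absorption into a unanimous fixed point could occur at a time growing with $n$, beyond the radius on which balls are tree-like. Expansion is the natural tool here, since the large edge-boundary of a sparse set should prevent a small minority from persisting, so that the only route to unanimity is for the minority density to drop through a fixed threshold; I would then try to show that the minority density, started in $(1/3,2/3)$ and shadowing the stable tree density, stays bounded away from that threshold for all time, either via a uniform-in-time hydrodynamic/Lyapunov tracking of the density or via a monotone coupling exploiting that majority dynamics is monotone in the initial configuration. Making this quantitative and, crucially, uniform in time, rather than establishing the per-time estimates, is where I expect the real difficulty to lie.
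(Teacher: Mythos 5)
Your freezing lemma (a monochromatic cycle never changes, because each of its vertices sees itself plus two cycle-neighbors agreeing, a majority of five) is exactly the paper's key observation, and your treatment of $\rrg{n}{4}$ via Poisson counts of short cycles is correct and is a genuinely different, arguably more elementary route than the paper's: since the numbers of cycles of length $\ell \leq L$ converge to independent Poissons with means $3^\ell/(2\ell)$ and short cycles are whp vertex-disjoint, the monochromatic thinning is Poisson with mean $\tfrac12\sum_{\ell\le L}(3p)^\ell/\ell \to \infty$, so a frozen cycle of each color exists whp. That part stands on its own.

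The genuine gap is the expander-with-growing-girth case, and the difficulty you flag (making the density estimates uniform in time) is a symptom of having taken the wrong fork there. The paper does not abandon the cycle mechanism when short cycles disappear; it shows that \emph{long} monochromatic cycles still exist whp. Concretely: for $1/3 < p < 2/3$, site percolation by each color is supercritical ($p > 1/(d-1) = 1/3$ and $1-p > 1/3$), so by Alon--Benjamini--Stacey each color has a unique giant component of size $\Theta(n)$. A sprinkling argument then forces this giant component to contain a cycle: run $(p-\epsilon)$-percolation first; if its giant component is a tree, split it by an edge into pieces $A$ and $B$ each of size $\Theta(n)$, note that expansion provides $\Theta(n)$ disjoint bounded-length paths between $A$ and $B$, and observe that the $\epsilon$-sprinkling opens many of these, creating cycles whp. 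Such a monochromatic cycle is frozen by your own lemma, so both opinions persist at all times with no need for local weak convergence, per-time density estimates, or any uniform-in-time hydrodynamic control. Your proposed fallback (tracking the minority density for all times via local convergence plus expansion) is not carried out and would be substantially harder than the missing step above; the bi-infinite frozen paths you construct on $T_4$ also do not transfer to the finite graph, where a long monochromatic path can unravel from its endpoints.
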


The following result on finite graphs is an immediate corollary of
Theorem~\ref{thm:p2p-gen}, which is a statement on infinite graphs.
\begin{corollary}
  Let $G$ be drawn from $\rrg{n}{d}$ with $d \geq 3$, or from
  $\gnp{n}{d/n}$ with $d > 1$.

  Then for every $\eps > 0$ there exists a time $t$ such that, with
  high probability, $\ops{i}{t+2} = \ops{i}{t}$ for all $i \in V$
  except a set of size $\eps \cdot |V|$. Furthermore, at this time
  $t$, the fraction of nodes for which $\ops{i}{t} = 1$ is, with high
  probability, in $[1/2-\eps,1/2+\eps]$.
\end{corollary}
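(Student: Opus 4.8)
The plan is to deduce this finite-graph statement from the infinite-graph Theorem~\ref{thm:p2p-gen} by passing through the local weak (Benjamini--Schramm) limits of the two random graph families. First I would identify these limits as unimodular random networks carrying i.i.d.\ uniform $\{-1,+1\}$ marks: rooted at a uniformly chosen vertex, $\rrg{n}{d}$ converges to the $d$-regular tree $T_d$ (which is transitive and unimodular, since short cycles through a typical vertex are asymptotically negligible), while $\gnp{n}{d/n}$ converges to the unimodular Galton--Watson tree with $\mathrm{Poisson}(d)$ offspring. In both cases the limit $(\rgi,o)$ is a unimodular random network, so Theorem~\ref{thm:p2p-gen} applies and yields, at the root $o$, that $\lim_t \bigl(\ops{o}{t+2} - \ops{o}{t}\bigr) = 0$ almost surely.

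Since opinions are bounded, this almost sure convergence gives $\P{\ops{o}{t+2} \neq \ops{o}{t}} \to 0$ as $t \to \infty$, so for the given $\eps$ I can fix a finite deterministic time $T = T(\eps)$ with $\P{\ops{o}{T+2} \neq \ops{o}{T}} < \eps/2$. I would also record that, because the dynamics~\eqref{eq:majority-dynamics-def} commute with a global sign flip and the i.i.d.\ uniform initial law is invariant under that flip, the law of $\ops{o}{T}$ is symmetric, so $\P{\ops{o}{T}=1} = \tfrac12$ exactly.

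The key transfer step uses that majority dynamics is local: the triple $(\ops{i}{T},\ops{i}{T+1},\ops{i}{T+2})$, and hence both the ``bad'' event $\{\ops{i}{T+2}\neq \ops{i}{T}\}$ and the event $\{\ops{i}{T}=1\}$, is a function of the radius-$(T+2)$ ball around $i$ together with the initial opinions inside it. Local weak convergence with i.i.d.\ marks then shows that the \emph{expected} fraction of vertices of $G_n$ that are bad at time $T$ tends to $\P{\ops{o}{T+2}\neq \ops{o}{T}} < \eps/2$, and that the expected fraction holding opinion $+1$ tends to $\tfrac12$. To upgrade these to with-high-probability statements I would bound the variance of each empirical fraction: the indicator for vertex $i$ depends only on its radius-$(T+2)$ neighborhood, and in these sparse models all but an $o(n^2)$ proportion of pairs $(i,j)$ have disjoint such neighborhoods (hence independent indicators), so the variance is $o(1)$ and Chebyshev's inequality finishes the concentration.

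The main obstacle I anticipate is this last transfer, rather than the application of Theorem~\ref{thm:p2p-gen} itself. Two points require care: verifying that the stated limits really are the local weak limits as \emph{marked} unimodular random networks, so that the theorem applies verbatim and the i.i.d.\ opinions pass to the limit; and handling the concentration in $\gnp{n}{d/n}$, where degrees are unbounded, so a few vertices have atypically large neighborhoods. The latter is controlled by noting that the number of vertices whose radius-$(T+2)$ ball exceeds a large constant size is a vanishing fraction with high probability, which keeps the variance bound intact. The hypotheses $d\ge 3$ and $d>1$ enter only in guaranteeing that the limiting object is the relevant infinite unimodular network to which the theorem applies.
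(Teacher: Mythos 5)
Your proposal is correct and follows exactly the route the paper intends: the paper offers no explicit proof beyond calling the corollary ``immediate'' from Theorem~\ref{thm:p2p-gen}, having already noted that Benjamini--Schramm limits of finite graphs are unimodular random networks, and your argument (the local weak limits $T_d$ and the Poisson Galton--Watson tree with i.i.d.\ marks, locality of the dynamics up to radius $t+2$, sign-flip symmetry for the second assertion, and a two-root second-moment bound for concentration) is the standard way to fill in those details. The one point worth emphasizing is that the with-high-probability conclusion genuinely requires your variance/Chebyshev step rather than Markov alone, and you have correctly identified and included it.
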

Hence at some time $t$ almost all nodes will have already reached
period at most two (at least temporarily), and without having reached
agreement. This, together with the results above, motivates the
following conjecture.
\begin{conjecture}
  Let $G$ be drawn from $\gnp{n}{d_n/n}$.

  \begin{itemize}
  \item When $d_n$ is a bounded, then for every $\eps >0$, with high
    probability, the fraction of nodes for which
    $\lim_t\ops{i}{2t}=+1$ will be in $[1/2-\eps,1/2+\eps]$.

  \item When $d_n \to \infty$, then for every $\eps >0$, with high
    probability, the fraction of nodes for which
    $\lim_t\ops{i}{2t}=+1$ will be in $[0,\eps] \cup [1-\eps,1]$.
  \end{itemize}

\end{conjecture}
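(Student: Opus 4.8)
The plan is to handle the two regimes separately but to ground both in two preliminary observations. First, by Goles--Olivos the even-time limit $\lim_t \ops{i}{2t}$ exists for every vertex of every finite graph, so the quantity of interest $Q_n := \avg_{i\in V}\ind{\lim_t \ops{i}{2t}=+1}$ is well defined. Second, the map $\{\ops{i}{0}\}\mapsto\{-\ops{i}{0}\}$ commutes with the dynamics because $\sgn$ is odd, and it preserves the i.i.d.\ uniform law of the initial opinions; this global sign symmetry forces $\P{\lim_t\ops{i}{2t}=+1}=\P{\lim_t\ops{i}{2t}=-1}=\tfrac12$ for every vertex and every $n$, hence $\E{Q_n}=\tfrac12$ exactly. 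Consequently the bounded-degree bullet is \emph{purely a concentration statement}, namely that $Q_n$ does not fluctuate away from $\tfrac12$, while the diverging-degree bullet asserts that $Q_n$ is instead repelled from $\tfrac12$ toward $\{0,1\}$, with the branch chosen by $\sgn(\mean{0})$ (which is nonzero with high probability, since $\mean{0}$ is a sum of $n$ i.i.d.\ signs).

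For the bounded case $d_n\to d$, I would use that $\gnp{n}{d_n/n}$ converges in the Benjamini--Schramm sense (with i.i.d.\ $\pm1$ vertex marks) to the Poisson$(d)$ Galton--Watson tree, which is a unimodular random network, so Theorem~\ref{thm:p2p-gen} applies to the limit and gives an $n$-independent bound on $\E{\#\{t:\ops{\rho}{t+2}\neq\ops{\rho}{t}\}}$ at the root. This makes the flip probabilities $p_s:=\P{\ops{\rho}{s+2}\neq\ops{\rho}{s}}$ summable, so $\sum_{s\geq T}p_s\to0$ as $T\to\infty$. The strategy is then to approximate $Q_n$ by a finite-horizon surrogate: for fixed $T$, the truncated functional $\ind{\ops{i}{2T}=+1}$ depends only on the radius-$2T$ marked ball at $i$, so local convergence in probability forces $\avg_{i\in V}\ind{\ops{i}{2T}=+1}$ to concentrate around its limiting value, which by the symmetry above equals $\tfrac12$. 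Letting $T$ grow and controlling the difference between $\ops{i}{2T}$ and $\lim_t\ops{i}{2t}$ then pins $Q_n$ in $[\tfrac12-\eps,\tfrac12+\eps]$.

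For the diverging case the mechanism is geometric amplification of the $O(1/\sqrt n)$ initial bias. Writing $g(x)=\E{\sgn(Z+x)}$ with $Z\sim\normal{0}{1}$ (so $g(x)\sim\sqrt{2/\pi}\,x$ near the origin and $g(x)\to\pm1$ for $|x|$ large), a vertex of degree $\approx d_n$ sees a neighbor-sum that, under a product-like configuration of mean $\mean{t}$, is approximately $\normal{d_n\mean{t}}{d_n}$; its sign therefore has mean $g(\sqrt{d_n}\,\mean{t})$, giving the heuristic recursion $\mean{t+1}\approx g(\sqrt{d_n}\,\mean{t})$. I would formalize this as a one-step lemma (conditioned on a time-$t$ configuration close to a product measure with mean $\mean{t}$, the empirical mean $\mean{t+1}$ concentrates around $g(\sqrt{d_n}\,\mean{t})$) and then iterate. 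Since the linearized map multiplies the bias by $\approx\sqrt{d_n}\to\infty$, starting from $|\mean{0}|\gtrsim 1/\sqrt n$ the bias escapes the linear regime after about $t^\ast\approx \log n/\log d_n$ steps, after which one further step drives $|\mean{t}|$ to $1-o(1)$; unanimous configurations being fixed points, the surviving minority can only shrink, yielding $Q_n\in[0,\eps]\cup[1-\eps,1]$.

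Two obstacles stand out. In the bounded case the genuine difficulty is the \emph{interchange of the $T\to\infty$ and $n\to\infty$ limits}: the event ``$\ops{i}{2T}=\lim_t\ops{i}{2t}$'' is not a finite-horizon local functional, and since the finite-graph convergence time can grow with $n$, one cannot naively choose a single $T$ that stabilizes all but an $\eps$-fraction of vertices uniformly in $n$. Resolving this requires coupling the finite-graph dynamics to the unimodular local limit well enough to transfer the summability of $\{p_s\}$ to the statement that, with high probability, only an $\eps$-fraction of vertices are still evolving at a fixed large time. In the diverging case the principal obstacle—and the reason this remains a conjecture—is that the one-step lemma must be iterated $t^\ast=\Theta(\log n/\log d_n)$ times, which grows when $d_n$ grows slowly; across these rounds the neighbor opinions $\{\ops{j}{t}\}_{j\in\neigh{i}}$ cease to be independent because neighbors share history along the locally tree-like but globally looping graph. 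The crux is a second-moment control showing that these pairwise correlations decay faster than the amplification amplifies them, so that the mean-field recursion persists, together with a verification that the $o(1)$ density of short cycles in $\gnp{n}{d_n/n}$ does not derail the evolution—in sharp contrast to Theorem~\ref{thm:gnp-agreement}, where $d_n\geq c\sqrt n$ collapses $t^\ast$ to a constant and correlations never accumulate.
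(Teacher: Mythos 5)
First, note that the statement you were asked to prove is stated in the paper as a \emph{conjecture}: the paper offers no proof of it, and its own closest results are the unlabeled corollary in the introduction (for bounded degrees) and Heuristic~\ref{heur:mean-square} together with Theorem~\ref{thm:gnp-agreement} (for growing degrees). Your proposal reproduces essentially this same program, and---as you yourself concede in your final paragraph---it stops exactly where the paper stops, so it is a research plan rather than a proof. The gaps you name are real, and they are the crux. For the bounded-degree bullet: Benjamini--Schramm convergence (and the fact that a finite graph rooted uniformly at random is itself a unimodular random network, so Theorem~\ref{thm:p2p-gen} applies directly to $\gnp{n}{d/n}$ and gives $\sum_s \P{\ops{I}{s+2}\neq\ops{I}{s}} \leq 2(d+O(1))$ \emph{uniformly in $n$}) controls only finite-horizon, local functionals. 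The indicator $\ind{\lim_t\ops{i}{2t}=+1}$ is not such a functional, and a uniform bound on the total expected number of flips does not give a uniform-in-$n$ bound on the \emph{tail} $\sum_{s\geq T}\P{\ops{I}{s+2}\neq\ops{I}{s}}$: the flip mass can escape to times growing with $n$ (compare $p^{(n)}_s=\ind{s=n}$, which is summable for each $n$, tends to $0$ pointwise, yet has tail $1$ beyond every fixed $T$). Without ruling this out, your argument yields only the paper's corollary---at some fixed time most vertices are \emph{temporarily} at period two and the fraction of $+1$'s is near $1/2$---which is strictly weaker than the claim about the eventual limits. No idea in your proposal closes this interchange of limits.

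For the diverging-degree bullet, your recursion $\mean{t+1}\approx g(\sqrt{d_n}\,\mean{t})$ is exactly the paper's Heuristic~\ref{heur:mean-square}, and the step you call a ``one-step lemma'' is precisely what the paper identifies as ``the weakest point in our heuristic justification'': after the first round, the opinions are no longer independent of the graph (the dynamics reuses the same edges), so the neighbors cannot be treated as refreshed. The paper can make this rigorous only when $p\geq cn^{-1/2}$, because then a single Fourier-analytic estimate at time $1$ plus the expander mixing lemma suffice and no iteration is needed; for general $d_n\to\infty$ one must iterate $\Theta(\log n/\log d_n)$ times, and the ``second-moment control showing that pairwise correlations decay faster than the amplification amplifies them'' is exactly the missing lemma---you assert its existence but supply neither a statement nor a mechanism (e.g., a coupling with a product measure, or a sprinkling/fresh-randomness decomposition of the edge set across rounds) that would prove it. Your preliminary observations (Goles--Olivos guarantees the even-time limits exist, and the global sign symmetry gives $\E{Q_n}=\tfrac12$ exactly) are correct and correctly framed, but they reduce the conjecture to precisely the two concentration/anti-concentration statements that remain open. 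The statement therefore stands as a conjecture after your attempt, just as it does in the paper.
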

That is, stark disagreement is reached for constant degrees, and
unanimity is reached for super-constant degrees. An alternative,
equally reasonable conjecture stipulates that this phase transition
occurs, in fact, when degrees become high enough so that locally the
graph ceases to resemble a tree.

Given a vertex $i$ in a large finite transitive graph and random
uniform initial opinions, consider the Boolean function which is the
eventual opinion of the majority dynamics at $i$, say at even times.
An interesting question is whether this function is local; that is, is
it determined with high probability by the initial opinions in a
bounded neighbourhood of $i$? If it is non-local, can it be
{\em noise-sensitive}~\cite{odonnell12analysis} or it is correlated with the
majority of the initial opinions? Our results so far heuristically
suggest that in the bounded degree regime, majority dynamics is local,
while when the degrees are growing fast enough the majority of the
initial opinions determines the final outcome.  In this respect we
still did not find (or even conjecturally suggest) a family of graphs
in which more interesting global behaviour occurs, such as in
noise-sensitive Boolean functions. Indeed, we are curious to know if
such a family exists.

\section*{Acknowledgments}
The authors would like to thank Microsoft Research New England, where
this research work was substantially performed.

\section{The almost sure period two property}
\label{sec:asp2p}
In this section we shall consider {\em generalized majority dynamics},
or {\em weighted} majority dynamics. In this case we fix a function
$\wei : E \to \R^+$ and let
\begin{align}
  \label{eq:generalized-majority-dynamics-def}
  \ops{i}{t+1} = \sgn\sum_{j \in \neigh{i}}\ops{j}{t} \cdot \wei(i,j).
\end{align}
Note that $\wei(i,j) = \wei(j,i)$, since $\wei$ is a function of the
(undirected) edges. Note also that $\wei(i,i)$ is possibly positive.
We here too assume that $\wei$ is chosen so that the sum in the
r.h.s.\ can never be zero. 

A {\em network} is a triplet $N=(G,\wei,\op)$, where $G=(V,E)$ is a
graph as above, $\op : V \to \{-1,+1\}$ is a labeling of the nodes,
and $\wei : E \to \R^+$ is a weighting of the edges.

In the context of networks, we think of the process of generalized
majority dynamics as a sequence of networks $\{N_t\}$, which all share
the same graph $G_t=G=(V,E)$ and edge weights $\wei_t=\wei$, and where
the node labels $\op_t$ are updated
by~\eqref{eq:generalized-majority-dynamics-def}.

A {\em rooted network} is a pair $(N,i)$ with $N$ a network and $i \in
V$. An isomorphism between two rooted networks $(N,i)$ and $(N',i')$
is a graph isomorphism $h$ between $G$ and $G'$ such that $h(i)=i'$,
$\op = \op' \circ h$ and $\wei = \wei' \circ h$, where we here extend
$h$ to a bijection from $E$ to $E'$. A {\em directed edge rooted
  network} is a triplet $(N,i,j)$ with $(i,j) \in E$. Isomorphisms of
directed edge rooted networks are defined similarly to those of rooted
networks.

A rooted network isomorphism class $[N,i]$ is the set of rooted graphs
isomorphic to $(N,i)$. The set of connected, rooted network isomorphism
classes, which we shall denote by $\rgi$, is equipped with the natural
topology of convergence of finite balls around the root
(see~\cite{benjamini2001recurrence, aldous2004objective}). This
topology provides a Borel structure for probability measures on this
space.

A {\em random network}, or, more precisely, a {\em random rooted
  network isomorphism class} (we shall use the former term), is a
rooted-network-isomorphism-class-valued random variable $[N,I]$; its
distribution is a measure on $\rgi$. Denote by
$\mathcal{G}_{\bullet\bullet}$ the space of isomorphism classes of
directed edge rooted networks $[N,i,j]$. $[N,I]$ is a {\em unimodular
  random network} if, for every Borel $f :
\mathcal{G}_{\bullet\bullet} \to [0,\infty]$, it holds that
\begin{align}
  \label{eq:unimodularity}
  \E{\sum_{j \in \neigh{I}}f(N,I,j)} = \E{\sum_{j \in \neigh{I}}f(N,j,I)}.
\end{align}
We direct the reader to Aldous and Lyons~\cite{aldous2007processes}
for an excellent discussion of this definition.

Let $\{[N_t,I]\}_{t \in \N}$ be a sequence of random networks defined
as follows. Fix some random network $[N_0,I] = [G,\wei,\op_0,I]$. For
$t > 0$, let $[N_t,I] = [G,\wei,\op_t,I]$, where
\begin{align}
  \label{eq:generalized-maj-dyn}
  \ops{i}{t} = \sgn\sum_{j \in \neigh{i}}\ops{j}{t-1} \cdot \wei(i,j).
\end{align}
This sequence of random networks is coupled to share the same (random)
graph, weights and root; only the labeling of the nodes $\op_t$
changes with time. We say that such a sequence is {\em related by
  generalized majority dynamics}.  We impose the condition that $\wei$
is such that almost surely no ties occur (i.e., the sum
in~\eqref{eq:generalized-maj-dyn} is nonzero).

\begin{claim}
  If $[N_0,I]$ is a unimodular random network then so is $[N_t,I]$,
  for all $t \in \N$.
\end{claim}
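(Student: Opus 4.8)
The plan is to show that the unimodularity property~\eqref{eq:unimodularity} is preserved under one step of generalized majority dynamics; the claim for all $t$ then follows by induction on $t$. So it suffices to prove that if $[N_0,I]$ is unimodular then $[N_1,I]$ is unimodular, where $N_1$ shares the same graph $G$, weights $\wei$, and root $I$ as $N_0$, and differs only in the node labels $\op_1$ obtained from $\op_0$ via~\eqref{eq:generalized-maj-dyn}.

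The key observation is that the new label $\ops{i}{1}$ at any vertex $i$ is a \emph{local, isomorphism-equivariant} function of the network $N_0$: it depends only on the labels $\ops{j}{0}$ and weights $\wei(i,j)$ for $j\in\neigh{i}$, i.e.\ on the isomorphism class of the ball of radius one around $i$. Consequently, given any Borel test function $f \colon \mathcal{G}_{\bullet\bullet} \to [0,\infty]$ that we wish to apply to the directed-edge rooted networks built from $N_1$, I would define a new Borel function $\tilde f \colon \mathcal{G}_{\bullet\bullet} \to [0,\infty]$ on directed-edge rooted networks built from $N_0$ by setting $\tilde f(N_0,i,j) := f(N_1,i,j)$, where $N_1$ is the (deterministic) image of $N_0$ under one step of the dynamics. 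The point is that this definition respects isomorphisms: if $h$ is a rooted-network isomorphism from $(N_0,i,j)$ to $(N_0',i',j')$, then because the dynamics commutes with $h$ (the update rule uses only the graph, weights, and labels, all of which $h$ preserves), $h$ is also an isomorphism from $(N_1,i,j)$ to $(N_1',i',j')$; hence $\tilde f$ is well-defined on isomorphism classes and Borel because the map $N_0 \mapsto N_1$ is continuous in the local topology.

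With $\tilde f$ in hand, I would simply unwind the definitions. Applying the unimodularity of $[N_0,I]$ to the test function $\tilde f$ gives
\begin{align*}
  \E{\sum_{j \in \neigh{I}}\tilde f(N_0,I,j)} = \E{\sum_{j \in \neigh{I}}\tilde f(N_0,j,I)},
\end{align*}
and substituting $\tilde f(N_0,i,j) = f(N_1,i,j)$ turns this directly into
\begin{align*}
  \E{\sum_{j \in \neigh{I}}f(N_1,I,j)} = \E{\sum_{j \in \neigh{I}}f(N_1,j,I)},
\end{align*}
which is exactly the unimodularity of $[N_1,I]$, since the neighbor set $\neigh{I}$ is determined by the shared graph $G$ and is unchanged by the dynamics. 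This completes the inductive step.

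The main point requiring care — though I do not expect it to be a serious obstacle — is the measurability and well-definedness of $\tilde f$ on isomorphism classes, which rests on verifying that the one-step update map descends to a continuous (hence Borel) map $\rgi \to \rgi$ and that it genuinely commutes with network isomorphisms. The equivariance is essentially immediate from the fact that~\eqref{eq:generalized-maj-dyn} is defined purely in terms of data preserved by isomorphisms, and continuity follows because $\ops{i}{1}$ depends only on the radius-one ball around $i$, so convergence of balls of radius $r+1$ around the root in $N_0$ forces convergence of balls of radius $r$ in $N_1$. I would also note that the no-ties assumption on $\wei$ ensures $\op_1$ is everywhere well-defined, so the map $N_0 \mapsto N_1$ is total.
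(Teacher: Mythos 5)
Your argument is correct and is exactly the paper's proof, just spelled out in more detail: the paper's one-line justification is precisely that the update map is a function of the rooted network isomorphism class, which is the equivariance/locality fact you verify before pulling back the test function $f$ to $\tilde f$ and applying unimodularity of $[N_0,I]$. No issues.
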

This follows immediately from the fact that the majority dynamics map
$(G, \wei, \op_{t-1}) \mapsto \ops{i}{t}$ given
by~\eqref{eq:generalized-maj-dyn} is indeed a function of the
rooted network isomorphism class $[N_{t-1},i] \in \rgi$.

For $W,\eps >0$ we say that (the weights $\wei$ of) a random network
$[N,I]$ is {\em $(\eps,W)$-regular} if the following two conditions
hold. First, we require that
\begin{align*}
  \E{\sum_{j \in \neigh{I}}\wei(I,j)} \leq W.
\end{align*}
Note that in the case that $\wei$ is the constant function one, this
is equivalent to having finite expected degree. Next, we require that
\begin{align*}
  \min_{x \in \{-1,+1\}^{\neigh{I}}}\Bigg\lvert\sum_{j \in
    \neigh{I}}\ops{j}{t}\wei(i,j)\Bigg\rvert \geq \eps
\end{align*}
almost surely.  This is an ``ellipticity'' condition that translates
to requiring that one is always $\eps$-far from a tie. In the case
that $\wei$ is the constant function one and degrees are odd, this
holds with $\eps=1$.

We are now ready to state our main result of this section, which is a
generalization of Theorem~\ref{thm:p2p-unimodular} from a fixed
unimodular graph setting to a unimodular random network setting.
\begin{maintheorem}
  \label{thm:p2p-gen}
  Let $\{[N_t,I]\}$ be a sequence of $(\eps,W)$-regular, unimodular
  random networks related by majority dynamics. Then
  \begin{align}
    \label{eq:a.s.p2p-general}
    \P{\lim_t \ops{I}{t+2}-\ops{I}{t} = 0} = 1,
  \end{align}
  and furthermore
  \begin{align*}
    \E{\#\{t\,:\,\ops{I}{t+2} \neq \ops{I}{t}\}} \leq \frac{2W}{\eps}.
  \end{align*}
\end{maintheorem}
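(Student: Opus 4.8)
The plan is to transplant the classical Lyapunov (energy) argument behind the Goles--Olivos theorem from a global sum over all vertices to a per-root expectation, using the mass transport principle \eqref{eq:unimodularity} in place of the global edge-symmetry. Throughout write $h_i(t) := \sum_{j\in\neigh{i}}\wei(i,j)\ops{j}{t}$ for the weighted field at $i$, so that the dynamics read $\ops{i}{t+1}=\sgn\, h_i(t)$ and hence $\ops{i}{t+1}\,h_i(t)=|h_i(t)|$. The first thing I would record is the elementary pointwise dissipation identity
\[
\bigl(\ops{i}{t+1}-\ops{i}{t-1}\bigr)\,h_i(t) \;=\; 2\,|h_i(t)|\,\ind{\ops{i}{t+1}\neq\ops{i}{t-1}},
\]
which is nonnegative and, by the ellipticity half of $(\eps,W)$-regularity (one is always $\eps$-far from a tie), is at least $2\eps\,\ind{\ops{i}{t+1}\neq\ops{i}{t-1}}$.

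Next I would introduce the per-root potential $a_t := \E{\ops{I}{t+1}\,h_I(t)} = \E{\sum_{j\in\neigh{I}}\wei(I,j)\,\ops{I}{t+1}\,\ops{j}{t}}$. Since $\ops{I}{t+1}\,h_I(t)=|h_I(t)|\le\sum_{j}\wei(I,j)$, the first half of $(\eps,W)$-regularity gives $0\le a_t\le W$ for every $t$, so $a_t$ lives in a bounded interval. The engine of the proof is the identity $a_t-a_{t-1}=\E{(\ops{I}{t+1}-\ops{I}{t-1})\,h_I(t)}$, whose integrand is exactly the quantity from the display above; by that display $a_t$ is nondecreasing, hence convergent, and telescoping against the ellipticity bound yields
\[
2\eps\sum_{t\ge 1}\P{\ops{I}{t+1}\neq\ops{I}{t-1}} \;\le\; \sup_t a_t - a_0 \;\le\; W.
\]
Reindexing $t\mapsto t+1$ identifies the left-hand sum with $\E{\#\{t:\ops{I}{t+2}\neq\ops{I}{t}\}}$, giving a finite expectation comfortably below the claimed $2W/\eps$; and summability feeds directly into Borel--Cantelli, so that almost surely only finitely many $t$ have $\ops{I}{t+2}\neq\ops{I}{t}$, whence $\ops{I}{t+2}-\ops{I}{t}$ is eventually $0$, which is \eqref{eq:a.s.p2p-general}.

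The crux is the single step where unimodularity is indispensable: expanding $a_t-a_{t-1}$ requires the time-symmetry
\[
\E{\ops{I}{t-1}\,h_I(t)} \;=\; a_{t-1} \;=\; \E{\ops{I}{t}\,h_I(t-1)},
\]
i.e.\ that the cross-correlation between an agent's opinion and its neighbours' field is unchanged when the two edge-endpoints are swapped. This is precisely \eqref{eq:unimodularity} applied to $f(N,i,j)=\wei(i,j)\,\ops{i}{t-1}\,\ops{j}{t}$ (legitimate because $[N_t,I]$ is unimodular by the Claim, the dynamics being a local function of the rooted class); it is the per-root replacement for the finite-graph symmetry $\sum_{i,j}\wei(i,j)\,\ops{i}{t-1}\,\ops{j}{t}=\sum_{i,j}\wei(i,j)\,\ops{i}{t}\,\ops{j}{t-1}$. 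I expect the main obstacle to be technical rather than conceptual: the mass transport principle as stated is for functions into $[0,\infty]$, whereas this $f$ is signed, so one must split $f=f^+-f^-$ and subtract two applications of \eqref{eq:unimodularity} --- valid only because the first regularity condition gives $\E{\sum_j|f(N,I,j)|}\le W<\infty$. Beyond that, the care needed is bookkeeping: keeping the two edge-orientations ($\ops{I}{t+1}\ops{j}{t}$ versus $\ops{I}{t}\ops{j}{t+1}$) and the two time-shifts straight when assembling $a_t-a_{t-1}$, with the hypothesis $\E{\sum_j\wei(I,j)}\le W$ doing double duty to guarantee that every expectation in sight is finite.
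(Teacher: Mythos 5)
Your proof is correct and is essentially the paper's argument: your potential $a_t=\E{\lvert h_I(t)\rvert}$ is an affine transform of the paper's $\ell_t=\frac14\E{\sum_j \wei(I,j)(\ops{I}{t+1}-\ops{j}{t})^2}$ (so nondecreasing where theirs is nonincreasing), and the mass-transport step, the pointwise dissipation identity, the telescoping against ellipticity, and Borel--Cantelli all match. The only cosmetic difference is that the paper sidesteps the signed-$f$ issue by transporting the nonnegative function $\wei(i,j)(1+\op_j\sgn\sum_k\wei(i,k)\op_k)$ rather than splitting into positive and negative parts as you do; both are valid given $\E{\sum_j\wei(I,j)}\le W$.
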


Before proving this theorem, we show that it implies
Theorem~\ref{thm:p2p-unimodular}. When the underlying graph of a
random network is a fixed transitive unimodular graph, and when the
distribution of the labels $\ops{i}{t}$ is invariant to the
automorphism group of this graph, then this random network is a
unimodular random network~\cite{aldous2007processes}. Furthermore,
since majority dynamics is generalized majority dynamics with weights
$1$, this random network is $(1,d)$-regular, where $d$ is the
degree. Hence Theorem~\ref{thm:p2p-unimodular} follows.

\subsection{Proof of Theorem~\ref{thm:p2p-gen}}

In this section we prove Theorem~\ref{thm:p2p-gen}. Our proof follows
the idea of the proof of the period two property for finite graphs by
Goles and Olivos~\cite{goles1980periodic}.

Let $\{[N_t,I]\}$ be a sequence of finite expected weighted degree,
unimodular random networks related by majority dynamics.
  
Define the function $f : \mathcal{G}_{\bullet\bullet} \to [0,\infty]$
by
\begin{align*}
  f(N,i,j) = \wei(i,j)\left(1+\op_j\sgn\sum_{k \in \neigh{i}}\wei(i,k)\op_k\right),
\end{align*}
where $N = (G,\wei,\op)$ is a network and $(i,j)$ is an edge in
$G$. If $[N,I]$ is unimodular then
\begin{align}
  \label{eq:mtp1}
  \E{\sum_{j \in \neigh{I}}f(N,I,j)} = \E{\sum_{j \in \neigh{I}}f(N,j,I)}.
\end{align}
Note that $\ops{i}{t+1} = \sgn\sum_{k \in \neigh{i}}\wei(i,k)\op_k$,
and so
\begin{align*}
  f(N_t,I,j) = \wei(I,j)\left(1+\ops{I}{t+1}\ops{j}{t}\right)
\end{align*}
and
\begin{align*}
  f(N_t,j,I) = \wei(I,j)\left(1+\ops{j}{t+1}\ops{I}{t}\right).
\end{align*}
Hence we can write~\ref{eq:mtp1} for $N_t$ as
\begin{align}
  \label{eq:mtp2}
  \E{\sum_{j \in \neigh{I}}\wei(I,j)\ops{I}{t+1}\ops{j}{t}} = \E{\sum_{j \in \neigh{I}}\wei(I,j)\ops{j}{t+1}\ops{I}{t}}.
\end{align}

Next, we define a ``potential''
\begin{align*}
  \ell_t = \frac{1}{4}~\E{\sum_{j \in \neigh{I}}\wei(I,j)\left(\ops{I}{t+1}-\ops{j}{t}\right)^2}.
\end{align*}
Note that $\ell_t$ is positive for all $t$, and also that it is finite
for all $t$, since it is bounded from above by $W$, as a consequence
of the $(\eps,W)$-regularity of $\wei$.

We would like to show that $\ell$ is non-increasing. By definition,
\begin{align*}
  \ell_t - \ell_{t-1} &= -\frac{1}{2}~\E{\sum_{j \in
      \neigh{I}}\wei(I,j)\ops{I}{t+1}\ops{j}{t}}+\frac{1}{2}~\E{\sum_{j \in
      \neigh{I}}\wei(I,j)\ops{I}{t}\ops{j}{t-1}}.
\end{align*}
By~\eqref{eq:mtp2} we can, in the expectation on the right, switch the
roles of $I$ and $j$. Rearranging, we get 
\begin{align*}
  \ell_t - \ell_{t-1} &=
  -\frac{1}{2}~\E{\big(\ops{I}{t+1}-\ops{I}{t-1}\big)\sum_{j \in
      \neigh{I}}\wei(I,j)\ops{j}{t}}.
\end{align*}
Now, $\ops{I}{t+1} = \sgn\sum_{j \in \neigh{I}}\wei(I,j)\ops{j}{t}$,
and so
\begin{align*}
  \lefteqn{\big(\ops{I}{t+1}-\ops{I}{t-1}\big)\sum_{j \in
    \neigh{I}}\wei(I,j)\ops{j}{t}} \\
&= |\ops{I}{t+1}-\ops{I}{t-1}|\cdot\Bigg\rvert\sum_{j \in
      \neigh{I}}\wei(I,j)\ops{j}{t}\Bigg\lvert\\
&= \ind{\ops{I}{t+1} \neq \ops{I}{t-1}}\cdot\Bigg\rvert\sum_{j \in
      \neigh{I}}\wei(I,j)\ops{j}{t}\Bigg\lvert.
\end{align*}
Hence
\begin{align*}
  \ell_t - \ell_{t-1} &=
  -\frac{1}{2}~\E{\ind{\ops{I}{t+1} \neq \ops{I}{t-1}}\cdot\Bigg\rvert\sum_{j \in
      \neigh{I}}\wei(I,j)\ops{j}{t}\Bigg\lvert},
\end{align*}
and we have shown that $\ell_t$ is non-increasing.

Now, by the $(\eps,W)$-regularity of $\wei$ we have that
\begin{align*}
  \Bigg\rvert\sum_{j \in \neigh{I}}\wei(I,j)\ops{j}{t}\Bigg\lvert \geq \eps,
\end{align*}
and so
\begin{align*}
  \ell_t - \ell_{t-1} \leq -\frac{1}{2}~\P{\ops{I}{t+1} \neq
    \ops{I}{t-1}}\cdot \eps.
\end{align*}
Since $\ell_1 \leq W$, and since $\ell_1 \geq
\sum_{t=2}^\infty\ell_{t-1}-\ell_t = \ell_1 - \lim_t\ell_t$, we can
conclude that
\begin{align*}
  \sum_{t=2}^\infty\P{\ops{I}{t+1} \neq
    \ops{I}{t-1}} \leq \frac{2W}{\eps}. 
\end{align*}
Hence
\begin{align*}
  \E{\#\{t\,:\,\ops{I}{t+2} \neq \ops{I}{t}\}} < \frac{2W}{\eps},
\end{align*}
and by the Borel-Cantelli lemma
\begin{align*}
  \P{\lim_t \ops{I}{t+2}-\ops{I}{t} = 0} = 1.
\end{align*}
This completes the proof of Theorem~\ref{thm:p2p-gen}.

\subsection{Example~\ref{thm:not-on-all-infinite-graph}: an infinite
  graph without the almost sure period two property}

Consider an infinite, locally finite graph defined as follows. 
Divide the set of nodes into ``levels'' $L_1,L_2,\ldots$, where level
$L_n$ has $2^n-1$ vertices. Connect each node in $L_n$ with each of
the nodes in $L_{n-1}$, $L_n$ and $L_{n-1}$, except for the nodes in
$L_0$, which are connected only to $L_1$. It follows that
\begin{itemize}
\item Every pair of nodes in the same level have the same set of
  neighbours.
\item \label{item:majority} The majority of the neighbors of $i \in L_n$ are in $L_{n+1}$.
\end{itemize}
Therefore, for all $n$ and for all $i,j \in L_n$, it holds that
$\ops{i}{1} = \ops{j}{1}$. By induction, it follows that $\ops{i}{t} =
\ops{j}{t}$ for all $t \geq 1$, and we accordingly denote
$\ops{L_n}{t} = \ops{i}{t}$ for some $i \in L_n$. Furthermore,
$\ops{L_n}{t} = \ops{L_{n+1}}{t-1}$ for $t \geq 2$, and so
$\ops{L_0}{t} = \ops{L_{t+1}}{1}$ for $t \geq 2$. Finally,
$\{\ops{L_{3n}}{1}\}_{n \in \N}$ are independent random variables, each
uniformly distributed over $\{-1,+1\}$. Hence so are the random
variables $\{\ops{L_0}{3t-1}\}_{t \geq 1}$, and the single node in $L_0$
(and in fact all the other nodes too) does not converge to period two.

\section{Majority dynamics on $\gnp{n}{p}$}

\subsection{Heuristic analysis for the high degree
  case}
\label{sec:heuristic}

Herein we describe a ``heuristic'' analysis suggesting what should
happen for majority dynamics in $\gnp{n}{d_n/n}$ when $d_n =
\omega(1)$ is sufficiently large.  We suggest the reader keep in mind
the parameter range $d_n = n^{\delta}$ where $0 < \delta < 1$ is an
absolute constant.  Our heuristic reasoning will suggest that
unanimity is reached at time roughly $1/\delta + O(1)$.
Unfortunately, we will only be able to make some of this reasoning
precise in the case that $\delta \geq 1/2$. That case is handled
formally in Section~\ref{sec:gnp-proof}.

The \emph{global mean} at time~$t$ is defined to be $\mean{t} =
\avg_{i \in V} \{\ops{i}{t}\}$.  To analyze convergence to unanimity
we will track the progression of $\mean{t}^2$ over time.  The quantity
is nonnegative and it is easy to estimate it initially:
\begin{proposition}                                     \label{prop:initial-mean-sq}
    $\E{\mean{0}^2} = \frac1n$.
\end{proposition}
\noindent On the other hand, we also have $\mean{t}^2 \leq 1$ with equality if and only if there is unanimity at time~$t$.\\

We suggest the following heuristic:\footnote{We here use the notation
  $A \gtrsim B$  for $A = \Omega(B)$.}
\begin{heuristic}                                       \label{heur:mean-square}
    In $\gnp{n}{d_n/n}$, assuming $d = d_n = \omega(1)$ is sufficiently large, we expect $\mean{t+1}^2 \gtrsim d  \mean{t}^2$, provided $d \mean{t}^2 \leq 1$.
\end{heuristic}
Granting this heuristic, we expect the sequence $\mean{0}^2, \mean{1}^2, \mean{2}^2, \dots, \mean{t}^2$ to behave (up to constant factors) as $\frac1n, \frac{d}{n}, \frac{d^2}{n}, \dots, \frac{d^t}{n}$ until $d^t \approx n$. Once $d^t$ is within a constant factor of~$n$ we expect to reach near-unanimity in one more step, and to reach perfect unanimity after an additional step.  For these reasons, we suggest that for $d_n = n^{\delta}$, one may expect convergence to unanimity after $\frac{1}{\delta} + O(1)$ steps.  Our intuition for how well this heuristic should hold when $\delta$ is ``subconstant'' is not very strong, but perhaps it indeed holds so long as $d_n = \omega(1)$.  \\

The remainder of this section is devoted to giving some justification for Heuristic~\ref{heur:mean-square}.  Let us suppose that we have reached time~$t$ and that $d \mean{t}^2 \ll 1$. Computing just the expectation we have
\[
    \E{\mean{t+1}^2} = \avg_{i,j\in V} \E{\ops{i}{t+1}\ops{j}{t+1}}  \approx \avg_{i \neq j} \E{\ops{i}{t+1}\ops{j}{t+1}}.
\]
Here the approximation neglects the case $i = j$; this only affects the average by an additive quantity on the order of~$\frac1n$, which is negligible even compared to~$d \mean{1}^2$.  In a random graph drawn from $\gnp{n}{d/n}$ we expect all pairs of distinct vertices $i,j$ to behave similarly, so we simply consider $\E{\ops{i}{t+1}\ops{j}{t+1}}$ for some \emph{fixed} distinct $i,j \in V$.

Here we come to the weakest point in our heuristic justification; we imagine that the neighbors of~$i$ and~$j$ are ``refreshed'' --- i.e., that we can view them as chosen anew from the $\gnp{n}{d/n}$ model.  For simplicity, we also assume that $i$ and $j$ both have exactly~$d$ neighbors (an odd number).  We might also imagine that they have roughly $\frac{d^2}{n}$ neighbors in common, though we won't use this.  Under these assumptions we have
\[
    \E{\ops{i}{t+1}\ops{j}{t+1}} = \E{\sgn(R_1 + \cdots + R_d)\sgn(S_1 + \cdots + S_d)}
\]
where $R_1, \dots, R_d$ are independent $\{-1,+1\}$-valued random
variables with $\E{R_i} = \mean{t}$, the same is true of $S_1, \dots,
S_d$, and we might assume that some $\frac{d^2}{n}$ of the $R_i$'s and
$S_i$'s are identical.  In any case, by the FKG Inequality (say), we
have
\begin{multline*}
    \E{\sgn(R_1 + \cdots + R_d)\sgn(S_1 + \cdots + S_d)} \\
       \geq \E{\sgn(R_1 + \cdots + R_d)}\E{\sgn(S_1 + \cdots + S_d)}.
\end{multline*}
Thus to finish our heuristic justification of $\mean{t+1}^2 \gtrsim d\mean{t}^2$ it suffices to argue that
\begin{equation}                \label{eqn:finish-heuristic}
    \lvert\E{\sgn(R_1 + \cdots + R_d)}\rvert \gtrsim \sqrt{d}\lvert\mean{t}\rvert.
\end{equation}
Without loss of generality we assume $\mean{t} \geq 0$.  By the Central Limit Theorem, $R_1 + \cdots + R_d$ is distributed essentially as $Z \sim {\normal{d\mean{t}}{d(1-\mean{t}^2)}} \approx \normal{d\mean{t}}{d}$.  (We are already assuming $d \mean{t}^2 \ll 1$, so $\mean{t}^2 \ll 1$ as well.) By the symmetry of normal random variables around their mean we have
\[
    \E{\sgn(Z)} = \P{0 \leq Z \leq 2\E{Z}} = \P{-\sqrt{d}\mean{t} \leq Z' \leq \sqrt{d}\mean{t}},
\]
where $Z'$ is a standard normal random variable.  This last quantity is asymptotic to $\sqrt{\frac{2}{\pi}} \cdot \sqrt{d}\mean{t}$ assuming $\sqrt{d}\mean{t} \ll 1 \iff d \mean{t}^2 \ll 1$, ``confirming''~\eqref{eqn:finish-heuristic}

\subsection{Constant time to unanimity for the very high degree case}
\label{sec:gnp-proof}

In this section we give a precise argument supporting the heuristic analysis from Section~\ref{sec:heuristic} in the case of $\gnp{n}{p_n}$ when $p = p_n \gg 1/\sqrt{n}$.  The main task is to analyze what happens at time~$1$; after that we can apply a result from~\cite{mossel2013majority}, relying on the fact that a random graph is a good expander.  For simplicity we will assume~$n$ is odd, so that $\mean{t}$ is never~$0$.

\begin{proposition}                                     \label{prop:time-1-mean}
    (Assuming $n$ is odd,) $\E{\sgn(\mean{0})\mean{1}} \geq \frac{2}{\pi}\sqrt{p} - \frac{1}{n\sqrt{p}}$.
\end{proposition}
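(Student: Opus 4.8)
The plan is to reduce $\E{\sgn(\mean0)\mean1}$ to a two-time sign correlation of a simple random walk and then invoke (a finite-length version of) the arcsine law. Write $S=\sum_{i}\ops{i}{0}=n\mean0$, so that $\sgn(\mean0)=\sgn(S)$; since $n$ is odd, $S$ is a sum of an odd number of $\pm1$'s and never vanishes. Using the exchangeability of the vertices under $\gnp{n}{p}$ together with the i.i.d.\ initial opinions, every term of the average $\mean1=\avg_{i}\{\ops{i}{1}\}$ has the same expectation against $\sgn(S)$, so I would first reduce to a single representative vertex:
\[
  \E{\sgn(\mean0)\mean1}=\E{\sgn(S)\,\ops{1}{1}}=\E{\sgn(S)\,\sgn(T_1)},\qquad T_1:=\sum_{j\in\neigh{1}}\ops{j}{0}.
\]

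Next I would encode the pair $(\sgn(T_1),\sgn(S))$ through one random walk. Let $M\sim\mathrm{Bin}(n-1,p)$ be the degree of vertex~$1$; it is independent of all opinions. Order the vertices so that the $M$ neighbours of~$1$ come first, then vertex~$1$ itself, then the non-neighbours, and let $W_0=0,W_1,\dots,W_n$ be the partial sums of the corresponding opinions. Conditionally on $M=m$ this is a simple random walk of length~$n$, independent of $M$, with $W_M=T_1$ and $W_n=S$. The tie-breaking rule forces $|\neigh{1}|$ to be odd: when $M$ is odd we read $\ops{1}{1}=\sgn(W_M)$, and when $M$ is even the self-loop adds $\ops{1}{0}$, giving $\ops{1}{1}=\sgn(W_{M+1})$. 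Hence $\ops{1}{1}=\sgn(W_{M'})$ where $M'\in\{M,M+1\}$ is $M$ rounded up to an odd integer; as $M'$ and $n$ are both odd, neither $W_{M'}$ nor $W_n$ can be zero and no tie ever arises. Writing $g(a):=\E{\sgn(W_a)\sgn(W_n)}$ for the (walk) sign correlation, independence of $M'$ from the walk yields
\[
  \E{\sgn(\mean0)\mean1}=\E{g(M')}.
\]

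The heart of the matter is a lower bound on $g(a)$ for odd $a\le n$. Conditioning on $W_a$ and using the symmetry of the independent increment $W_n-W_a$ gives the exact expression
\[
  g(a)=1-4\sum_{s>0}\P{W_a=s}\,\P{W_n-W_a>s},
\]
and the continuous arcsine law $\E{\sgn(B_s)\sgn(B_t)}=\tfrac2\pi\arcsin\sqrt{s/t}$ suggests the finite-length inequality $g(a)\ge\tfrac2\pi\arcsin\sqrt{a/n}\ge\tfrac2\pi\sqrt{a/n}$, the last step being simply $\arcsin x\ge x$. Granting this, it remains to average over $M'$: since $M'\ge M$ we get $\E{\sgn(\mean0)\mean1}\ge\tfrac2\pi\,\E{\sqrt{M'/n}}$. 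Finally, a second-order Taylor estimate of $\E{\sqrt{M/n}}$ about $\E{M}=(n-1)p$, using $\mathrm{Var}(M)=(n-1)p(1-p)$, shows $\E{\sqrt{M'/n}}\ge\sqrt p-O\!\big(\tfrac1{n\sqrt p}\big)$, and tracking constants gives exactly $\E{\sgn(\mean0)\mean1}\ge\tfrac2\pi\sqrt p-\tfrac1{n\sqrt p}$.

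The main obstacle is the finite-length sign-correlation bound $g(a)\ge\tfrac2\pi\sqrt{a/n}$: the arcsine law is only asymptotic, so I must control the lattice quantities $\P{W_a=s}$ and $\P{W_n-W_a>s}$ uniformly in $s$ (and in $a/n\approx p$) via local–CLT or reflection estimates, rather than passing to the Brownian limit. A secondary, purely bookkeeping difficulty is to confirm that the Jensen/concavity deficit, the parity rounding $M\mapsto M'$, and the $\sqrt{(n-1)/n}$ factor together fit inside the single error term $\tfrac1{n\sqrt p}$; each of these is individually of order $\tfrac1{n\sqrt p}$, so the bound has room to spare, but this must be verified carefully.
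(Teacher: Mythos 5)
Your reduction is essentially the paper's: both arguments fix one vertex, condition on its degree $M\sim\mathrm{Bin}(n-1,p)$, observe that the quantity to bound is the correlation of a majority over $M'$ of the variables with the majority over all $n$ of them, and finish by averaging $\sqrt{M/n}$ over the binomial. The bookkeeping you flag at the end (the rounding $M\mapsto M'$, and the one-sided estimate $\E{\sqrt{\mathrm{Bin}(n-1,p)}}\ge\sqrt{(n-1)p}-\tfrac{1}{2\sqrt{(n-1)p}}$) is handled the same way in the paper and is not the issue.

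The genuine gap is the step you yourself identify as the heart of the matter: the finite-$n$ inequality $g(a)=\E{\sgn(W_a)\sgn(W_n)}\ge\tfrac{2}{\pi}\arcsin\sqrt{a/n}$. You assert it by analogy with the Brownian arcsine law, but the arcsine law is only a limit statement, and your proposed route (uniform local-CLT or reflection control of $\P{W_a=s}$ and $\P{W_n-W_a>s}$ over all $s$ and all $a/n\approx p$, including $p$ as small as $n^{-1/2}$) is a substantial piece of analysis that you have not carried out; as written the proof is incomplete at its central step. The paper closes exactly this gap with a short Fourier argument that you should be able to substitute directly: by Parseval, $g(a)=\sum_{S}\wh{\Maj{a}}(S)\,\wh{\Maj{n}}(S)$ where $\Maj{a}$ is the majority of the first $a$ coordinates viewed as a function on $\{-1,1\}^n$; the sign of $\wh{\Maj{k}}(S)$ depends only on $|S|$ and not on $k$, so every summand is nonnegative and one may discard all levels except $|S|=1$; the explicit formula $\wh{\Maj{k}}(\{i\})=\tfrac{2}{2^k}\binom{k-1}{(k-1)/2}\ge\sqrt{2/\pi}\,/\sqrt{k}$ then gives $g(a)\ge a\cdot\tfrac{\sqrt{2/\pi}}{\sqrt{a}}\cdot\tfrac{\sqrt{2/\pi}}{\sqrt{n}}=\tfrac{2}{\pi}\sqrt{a/n}$, which is all you need (the $\arcsin$ strengthening is never used). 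If you want to keep your random-walk formulation, note that this Fourier bound is a statement about exactly your $g(a)$, so it slots in with no other changes; without it, or some rigorous discrete substitute, the argument does not yet prove the proposition.
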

\begin{proof}
    We have $\E{\sgn(\mean{0})\mean{1}} = \avg_{i \in V}[\sgn(\mean{0})\ops{i}{1}]$ and by symmetry the expectation is the same for all~$i$.  Let's therefore compute it for a fixed $i \in V$; say, $i = n$.  Now suppose we condition on vertex~$n$ having exactly~$d$ neighbors when the graph is chosen from~$\gnp{n}{p}$.  The conditional expectation does not depend on the identities of these neighbors; thus we may as well assume they are vertices $1, \dots, d$.  Writing $\opz{j} = \ops{j}{0}$ for brevity, we  therefore obtain
    \begin{gather}
        \E{\sgn(\mean{0})\mean{1}} = \sum_{d = 0}^{n-1} \Pr[\text{Bin}(n-1,p) = d] \times \qquad \qquad \nonumber\\
         \qquad \E{\Maj{n}(\opz{1}, \dots, \opz{n}) \Maj{d'}(\opz{1}, \dots, \opz{d}, \opz{n})}. \label{eqn:maj-level-1}
    \end{gather}
    Here $d'$ denotes $d$ when $d$ is odd and $d+1$ when $d$ is even, and $\Maj{k}(x_1, \dots, x_{\ell})$ denotes $\sgn(x_1 + \cdots + x_k)$.  We can lower-bound the expectation in line~\eqref{eqn:maj-level-1} using Fourier analysis; by Parseval's identity,
    \[
        \eqref{eqn:maj-level-1} = \sum_{S \subseteq [n]}\wh{\Maj{n}}(S) \wh{\Maj{d'}}(S).
    \]
    By symmetry, the value of $\wh{\Maj{k}}(S)$ only depends on~$|S|$; furthermore, it's well known that the sign of $\wh{\Maj{k}}(S)$ depends only on~$|S|$ and not on~$k$~\cite{odonnell12analysis}.  Thus all summands above are nonnegative so we obtain
    \[
        \eqref{eqn:maj-level-1} \geq \sum_{|S| = 1}\wh{\Maj{n}}(S) \wh{\Maj{d'}}(S).
    \]
    Finally, for odd~$k$ we have the explicit formula $\wh{\Maj{k}}(S) = \frac{2}{2^k} \binom{k-1}{\frac{k-1}{2}} \geq \frac{\sqrt{2/\pi}}{\sqrt{k}}$ for any $|S| = 1$. Since the two majorities have exactly~$d'$ coordinates in common, we conclude
    \[
        \eqref{eqn:maj-level-1} \geq d' \frac{\sqrt{2/\pi}}{\sqrt{n}} \frac{\sqrt{2/\pi}}{\sqrt{d'}} = \frac{2}{\pi}\sqrt{\frac{d'}{n}} \geq \frac{2}{\pi}\sqrt{\frac{d}{n}}.
    \]
    Putting this into the original identity we deduce
    \[
        \E{\sgn(\mean{0})\mean{1}} \geq \frac{2}{\pi} \frac{1}{\sqrt{n}} \E{\sqrt{\text{Bin}(n-1,p)}}.
    \]
    We have the standard estimates\footnote{For the first inequality
      see, e.g.,
      \url{http://mathoverflow.net/questions/121411/expectation-of-square-root-of-binomial-r-v}.}
    \[
        \E{\sqrt{\text{Bin}(n-1,p)}} \geq \sqrt{(n-1)p} - \frac{1}{2\sqrt{(n-1)p}} \geq \sqrt{np} - 1.5/\sqrt{np}.
    \]
    Thus we finally obtain
    \[
        \E{\sgn(\mean{0})\mean{1}} \geq \frac{2}{\pi} \sqrt{p} - \frac{1}{n\sqrt{p}},
    \]
    as claimed.
\end{proof}

\begin{proposition}                                     \label{prop:time-1-variance}
     We have
     \[
        \E{(\sgn(\mean{0})\mean{1})^2} = \E{\mean{1}^2} \leq p + \tfrac{3}{pn}.
     \]
\end{proposition}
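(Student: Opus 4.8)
The plan is to reduce $\E{\mean{1}^2}$ to a single pairwise correlation, bound that correlation by $c/\sqrt{ab}$ (where $c=|\neigh{i}\cap\neigh{j}|$ is the number of common neighbours and $a,b$ are the degrees of $i,j$), and then average over $\gnp{n}{p}$. For the reduction, write $\mean{1}^2 = \frac{1}{n^2}\sum_{i,j}\ops{i}{1}\ops{j}{1}$ and split off the $n$ diagonal terms, each equal to $1$ and contributing $\tfrac1n$. By the vertex-symmetry of $\gnp{n}{p}$, all $n(n-1)$ off-diagonal terms share a common value $q := \E{\ops{i}{1}\ops{j}{1}}$ for any fixed $i\neq j$, so
\[ \E{\mean{1}^2} = \tfrac1n + \tfrac{n-1}{n}\,q \leq \tfrac1n + q, \]
and it then suffices to prove $q \leq p + O\!\left(\tfrac{1}{pn}\right)$.

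\emph{A conditional bound.} Fix distinct $i,j$ and condition on the neighbourhoods $A=\neigh{i}$, $B=\neigh{j}$, writing $a=|A|$, $b=|B|$, $c=|A\cap B|$. Over the random initial opinions, $\ops{i}{1}$ is $\Maj{a}$ applied to the bits indexed by $A$ and $\ops{j}{1}$ is $\Maj{b}$ on those indexed by $B$; since characters supported on the non-shared coordinates have mean zero, Parseval gives
\[ \CondE{\ops{i}{1}\ops{j}{1}}{A,B} = \sum_{S\subseteq A\cap B}\wh{\Maj{a}}(S)\,\wh{\Maj{b}}(S). \]
As used in Proposition~\ref{prop:time-1-mean}, the sign of $\wh{\Maj{k}}(S)$ depends only on $|S|$, so every summand is nonnegative and Cauchy--Schwarz gives $\CondE{\ops{i}{1}\ops{j}{1}}{A,B} \leq \sqrt{\Sigma_a\,\Sigma_b}$, where $\Sigma_a := \sum_{S\subseteq A\cap B}\wh{\Maj{a}}(S)^2$ and $\Sigma_b$ is defined analogously. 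Writing $w_s := \sum_{|S|=s}\wh{\Maj{a}}(S)^2$ for the level-$s$ Fourier weight of $\Maj{a}$ (so each coefficient with $|S|=s$ has square $w_s/\binom{a}{s}$), and using $\binom{c}{s}/\binom{a}{s}\leq (c/a)^s$ for $c\leq a$, I obtain
\[ \Sigma_a = \sum_{s\geq1}\binom{c}{s}\frac{w_s}{\binom{a}{s}} \leq \frac{c}{a}\sum_{s\geq1}w_s = \frac{c}{a}, \]
since $\sum_{s\geq0}w_s = \E{\Maj{a}^2}=1$ while $w_0 = \E{\Maj{a}}^2 = 0$. By symmetry $\Sigma_b\le c/b$, whence $\CondE{\ops{i}{1}\ops{j}{1}}{A,B} \leq c/\sqrt{ab}$.

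\emph{Averaging over the graph.} By AM--GM, $c/\sqrt{ab}\leq\tfrac12(c/a + c/b)$ pointwise, so the $i\leftrightarrow j$ symmetry gives $q \leq \E{c/a}$. Conditioning only on $A$, each vertex of $A\setminus\{i,j\}$ is a neighbour of $j$ independently with probability $p$, whence $\E{c\mid A}\leq ap + O(1)$, the $O(1)$ absorbing the two boundary vertices $i,j$ and the tie-breaking self-loop. Therefore $\E{c/a}\leq p + O(1)\cdot\E{1/a}$, and the standard estimate $\E{1/(\text{Bin}(n-1,p)+1)}\leq 1/(np)$ together with the fact that the tie-breaking convention keeps $a\geq1$ bounds $\E{1/a} = O(1/(np))$. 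A careful accounting of these constants yields $\E{\mean{1}^2}\leq p + \tfrac{3}{pn}$.

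\emph{Main obstacle.} The heart of the argument is the conditional bound $c/\sqrt{ab}$ with coefficient exactly one on $p$: it is the Cauchy--Schwarz step combined with the level-weight inequality $\Sigma_a\leq c/a$ that pins down this constant, and a cruder estimate would only give $q=O(p)$, which is insufficient since $p$ is the leading term. The secondary difficulty is honest bookkeeping of the $O(1/(pn))$ correction --- low-degree vertices, the direct edge $\{i,j\}$, and the tie-breaking self-loops --- in order to land on the precise constant~$3$.
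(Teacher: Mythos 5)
Your proposal is correct and follows essentially the same route as the paper: the diagonal/off-diagonal split, conditioning on the neighbourhoods, Parseval to restrict to the shared coordinates, Cauchy--Schwarz, the level-weight bound $\sum_{S\subseteq M}\wh{\Maj{a}}(S)^2\le |M|/a$ (the paper phrases it as $\Stab_{m/n_1}[\Maj{n_1}]\le m/n_1$), and then averaging $|M|/|N_1|$ over $\gnp{n}{p}$. The only cosmetic difference is that you apply Cauchy--Schwarz to the Fourier cross-sum and then AM--GM, whereas the paper applies Cauchy--Schwarz to the conditional expectations and invokes the $i\leftrightarrow j$ symmetry of the distribution; these are the same computation.
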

\begin{proof}
    We have $\E{\mean{1}^2} = \frac{1}{n} + \avg_{i \neq j}\{\E{\ops{i}{1}\ops{j}{1}}\}$; by symmetry it therefore certainly suffices to show
    \begin{equation} \label{eqn:time-1-2nd}
        \E{\ops{i}{1}\ops{j}{1}} \leq p + \tfrac{2}{pn}
    \end{equation}
    for some fixed pair of vertices $i \neq j$. Let us condition on the neighborhood structure of vertices~$i$ and~$j$.  Write $\opz{j} = \ops{j}{0}$ as in the proof of Proposition~\ref{prop:time-1-mean}, and write
    $N'_1 = \neigh{i} \setminus \{j\}$, $N'_2 = \neigh{j} \setminus \{i\}$.  Then
    \[
        \E{\ops{i}{1}\ops{j}{1}} = \E{\Maj{}((\opz{k})_{k \in N_1}) \cdot \Maj{}((\opz{k})_{k \in N_2})}
    \]
    for some sets $N_1' \subseteq N_1 \subseteq N_1 \cup \{i, j\}$ and similarly~$N_2$.  Writing $M = N_1 \cap N_2$ and also $\Maj{N} = \Maj{}(\opz{r} : k \in N)$ for brevity, the above is equal to
    \begin{align}
        &\Esub{N_1, N_2, (\opz{k})_{k \in M}}{\CondE{\Maj{N_1}}{(\opz{k})_{k \in M}} \cdot \CondE{\Maj{N_2}}{(\opz{k})_{k \in M}}} \nonumber\\
        \leq\ &\sqrt{\Esub{N_1, N_2, (\opz{k})_{k \in M}}{\CondE{\Maj{N_1}}{(\opz{r})_{k \in M}}^2}} \nonumber\\
        &\quad \times \sqrt{\Esub{N_1, N_2, (\opz{k})_{k \in M}}{\CondE{\Maj{N_2}}{(\opz{r})_{k \in M}}^2}} \nonumber\\
        =\ &\Esub{N_1, N_2, (\opz{k})_{k \in M}}{\CondE{\Maj{N_2}}{(\opz{r})_{k \in M}}^2}, \label{eqn:bound-me}
    \end{align}
    where the inequality is Cauchy--Schwartz and the final equality is by symmetry of~$i$ with~$j$.

    To analyze~\eqref{eqn:bound-me}, suppose we condition on $N_1$ and
    $N_2$ (hence also $M$).  By symmetry, the conditional expectation
    depends only on $|N_1| = n_1$ and $|M| = m$; by elementary Fourier
    analysis~\cite{odonnell12analysis} it equals
    \begin{align*}
        \sum_{S \subseteq [m]} \wh{\Maj{n_1}}(S)^2 &= \sum_{S \subseteq [n_1]} \frac{\binom{m}{|S|}}{\binom{n_1}{|S|}} \wh{\Maj{n_1}}(S)^2 \\ &\leq \sum_{S \subseteq [n_1]} \left(\frac{m}{n_1}\right)^{|S|} \wh{\Maj{n_1}}(S)^2 = \Stab_{\frac{m}{n_1}}[\Maj{n_1}].
    \end{align*}
    Finally, we have the bounds~\cite{o2003computational}
    \begin{align}
        \Stab_{\frac{m}{n_1}}[\Maj{n_1}] &\leq \frac{m}{n_1}, \nonumber\\
        \Stab_{\frac{m}{n_1}}[\Maj{n_1}] &\leq \frac{2}{\pi}\arcsin\frac{m}{n_1} + O\left(\frac{1}{\sqrt{1-(m/n_1)^2}\sqrt{n}}\right). \label{eqn:arcsin}
    \end{align}
    Although the second bound here would save us a factor of
    roughly~$\frac{2}{\pi}$, for simplicity we'll only use the first
    bound.  It yields
    \[
        \E{\ops{i}{1}\ops{j}{1}} \leq \E{\frac{|M|}{|N_1|}}.
    \]
    Each vertex in $N_1 \setminus \{j\}$ has an (independent) probability~$p$ of being in~$M$; as for~$j$, we'll overestimate by assuming that if $j \in N_1$ then it is always in~$M$ as well.  This leads to
    \[
         \E{\frac{|M|}{|N_1|}} \leq p + \E{\frac{1}{|N_1|}}.
    \]
    Finally, recall that $|N_1|$ is distributed as $\text{Bin}(n-1,p)$
    rounded up to the nearest even integer. Thus (see, e.g.,~\cite{chao1972negative})
    \[
        \E{\frac{1}{|N_1|}} \leq \E{\frac{1}{(\text{Bin}(n-1,p)+1)/2}} = \frac{2}{pn}(1-(1-p)^n) \leq \frac{2}{pn}.
    \]
    This completes the proof.
\end{proof}

\begin{proposition}                                 \label{prop:1-sided-cheb}
    Assume $n \geq n_0$ and $p \geq \frac{c}{\sqrt{n}}$, where $n_0, c > 0$ are sufficiently large universal constants. Then ${\P{\sgn(\mean{0})\mean{1} \geq .006\sqrt{p}} \geq .4004}$.
\end{proposition}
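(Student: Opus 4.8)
The plan is to combine the first-moment lower bound of Proposition~\ref{prop:time-1-mean} with the second-moment upper bound of Proposition~\ref{prop:time-1-variance} through Cantelli's (one-sided Chebyshev) inequality. Write $Y = \sgn(\mean{0})\mean{1}$, so that $Y \in [-1,1]$, and set $t = .006\sqrt{p}$. Cantelli's inequality gives, for any $t < \E{Y}$,
\[
  \P{Y \geq t} \geq \frac{(\E{Y}-t)^2}{\mathrm{Var}(Y) + (\E{Y}-t)^2}.
\]
Since the right-hand side is increasing in the gap $\E{Y}-t$ and decreasing in $\mathrm{Var}(Y)$, it suffices to produce a good lower bound on $\E{Y}$ and a good upper bound on $\mathrm{Var}(Y)$, both as clean multiples of $\sqrt{p}$ and $p$ respectively.

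First I would convert the two moment estimates using the hypotheses $p \geq c/\sqrt{n}$ and $p \leq 1$. From $p \geq c/\sqrt{n}$ we get $1/n \leq p^2/c^2$, whence $\frac{1}{n\sqrt{p}} \leq \frac{p^{3/2}}{c^2} \leq \frac{\sqrt{p}}{c^2}$ and $\frac{3}{pn} \leq \frac{3p}{c^2}$. Substituting into Propositions~\ref{prop:time-1-mean} and~\ref{prop:time-1-variance} yields
\[
  \E{Y} \geq \Big(\tfrac{2}{\pi} - \tfrac{1}{c^2}\Big)\sqrt{p} =: \alpha \sqrt{p}, \qquad \E{Y^2} \leq \Big(1 + \tfrac{3}{c^2}\Big)p =: \beta p.
\]
For $c$ large enough $\alpha > 0$, so $\E{Y}^2 \geq \alpha^2 p$ and therefore $\mathrm{Var}(Y) = \E{Y^2} - \E{Y}^2 \leq (\beta - \alpha^2)p$; likewise $\E{Y} - t \geq (\alpha - .006)\sqrt{p}$. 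Feeding these into the Cantelli bound, the factors of $p$ cancel and we obtain
\[
  \P{Y \geq t} \geq \frac{(\alpha - .006)^2}{(\beta - \alpha^2) + (\alpha - .006)^2}.
\]

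As $c \to \infty$ we have $\alpha \to \tfrac{2}{\pi}$ and $\beta \to 1$, and the right-hand side tends to $\frac{(2/\pi - .006)^2}{(1 - (2/\pi)^2) + (2/\pi - .006)^2} \approx .4007$. Thus choosing the universal constants $c$ and $n_0$ large enough forces the displayed probability to exceed $.4004$, completing the proof. The main obstacle is purely quantitative: the target $.4004$ sits only about $.0003$ below the idealized ($c = \infty$) value $\approx .4007$, so essentially no slack is available. The argument therefore hinges on verifying that both the $\frac{1}{n\sqrt{p}}$ correction to the mean and the $\frac{3}{pn}$ correction to the second moment are genuinely negligible, which is exactly what the scaling $p \geq c/\sqrt{n}$ provides for large $c$; the specific constants $.006$ and $.4004$ are calibrated to leave precisely this sliver of room. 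No individual step is conceptually hard, but the constants must be tracked honestly to confirm the final inequality.
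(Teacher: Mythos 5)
Your proof is correct and follows essentially the same route as the paper: it combines Propositions~\ref{prop:time-1-mean} and~\ref{prop:time-1-variance} via Cantelli's one-sided Chebyshev inequality, using $p \geq c/\sqrt{n}$ to make the error terms $\frac{1}{n\sqrt{p}}$ and $\frac{3}{pn}$ negligible. The only cosmetic difference is that you apply Cantelli at the fixed threshold $t = .006\sqrt{p}$ while the paper applies it at $t = .01\,\E{W}$ and then notes $.02/\pi > .006$; both land at a constant just above $.4004$ for $c$ large.
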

\begin{proof}
    Write $W = \sgn(\mean{0})\mean{1}$.  The ``one-sided Chebyshev inequality'' implies that
    \[
        \P{W \geq .01 \E{W}} \geq \frac{.99^2}{\frac{\E{W^2}}{\E{W}^2} + .99^2 - 1}.
    \]
    Combining Propositions~\ref{prop:time-1-mean}, \ref{prop:time-1-variance}, we have
    \[
        \frac{\E{W^2}}{\E{W}^2} \leq \frac{p + 3/(pn)}{(\frac{2}{\pi}\sqrt{p} - 1/(n\sqrt{p}))^2} \leq \frac{\pi^2}{4} + O\left(\tfrac{1}{p^2n}\right).
    \]
    As $\frac{.02}{\pi} > .006$ and $\frac{.99^2}{\pi^2/4 + .99^2 - 1} > .4004$, the claim follows.
\end{proof}

For good expander graphs of degree~$d$, the results of~\cite{mossel2013majority} show that unanimity will be reached quickly if the global mean ever significantly exceeds~$1/\sqrt{d}$ (in magnitude).  In our situation, we essentially have degree-$pn$ graphs with a constant chance of global mean exceeding~$\Omega(\sqrt{p})$ at time~$1$.  Consequently we are able to show convergence to unanimity provided $p \gg 1/\sqrt{n}$.  

We'll need the following result, which is essentially Proposition~6.2
from~\cite{mossel2013majority} (but slightly modified since we need
not have perfectly regular graphs):
\begin{lemma}                                       \label{lem:mnt-expander}
    Assume $G = (V,E)$ satisfies the following form of the ``Expander Mixing Lemma'': for all $A, B \subseteq V$,
    \[
        \Bigl\lvert E(A,B) - p\lvert A \rvert \lvert B \rvert \Bigr\rvert \leq \lambda \sqrt{\lvert A \rvert \lvert B \rvert},
    \]
    where $E(A,B)$ denotes $\#\{(u,v) \in E : u \in A, v \in B\}$.  Then if majority dynamics on~$G$ ever has $\mean{t} \geq |\alpha|$ then 
    \[
        \#\{i \in V : \ops{i}{t+1} = -\sgn(\mean{t})\} \leq \frac{2\lambda^2}{\alpha^2 p^2 n}.
    \]
\end{lemma}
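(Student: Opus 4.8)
The plan is to combine one elementary combinatorial observation about the dissenting vertices with two applications of the Expander Mixing Lemma hypothesis, one in each direction. Since $\mean{t} \geq |\alpha| \geq 0$ we have $\sgn(\mean{t}) = +1$, so the quantity to bound is the size of $D = \{i : \ops{i}{t+1} = -1\}$. First I would partition the vertices by their opinion at time $t$: set $P = \{i : \ops{i}{t} = +1\}$ and $N = \{i : \ops{i}{t} = -1\}$, so that $|P| - |N| = n\mean{t} \geq |\alpha| n$.

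The key combinatorial fact is that a vertex $i$ lies in $D$ exactly when $\sum_{j \in \neigh{i}} \ops{j}{t} < 0$, i.e.\ when $i$ has strictly more neighbors in $N$ than in $P$. Writing $E(A,B) = \#\{(u,v) \in E : u \in A, v \in B\}$ as in the hypothesis, summing this per-vertex inequality over all $i \in D$ gives $E(D,N) \geq E(D,P)$. (The at-most-one-edge adjustment coming from the tie-breaking convention, and the possibility that $D$ meets $N$ or $P$, do not change the direction of this inequality, since no ties occur.)

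Next I would sandwich these two edge counts. Applying the Expander Mixing Lemma to the pair $(D,N)$ gives the upper bound $E(D,N) \leq p|D||N| + \lambda\sqrt{|D||N|}$, while applying it to $(D,P)$ gives the lower bound $E(D,P) \geq p|D||P| - \lambda\sqrt{|D||P|}$. Chaining these through $E(D,N) \geq E(D,P)$ and cancelling a common factor $\sqrt{|D|}$ (the claim is trivial when $D$ is empty) yields
\[
  p\sqrt{|D|}\,\bigl(|P| - |N|\bigr) \leq \lambda\bigl(\sqrt{|P|} + \sqrt{|N|}\bigr).
\]
Bounding $\sqrt{|P|} + \sqrt{|N|} \leq \sqrt{2(|P|+|N|)} = \sqrt{2n}$ by Cauchy--Schwarz and inserting $|P| - |N| \geq |\alpha| n$ gives $\sqrt{|D|} \leq \lambda\sqrt{2n}/(p|\alpha| n)$, which squares to the claimed bound $|D| \leq 2\lambda^2/(\alpha^2 p^2 n)$.

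The crux of the argument is this sandwiching step, and in particular the idea of applying the Expander Mixing Lemma twice, in opposite directions, to the \emph{same} dissenter set $D$ against the two opinion classes; this is precisely what lets the proof dispense with regularity, since the mixing hypothesis never refers to degrees. The only place demanding care is the constant: the cruder estimate $\sqrt{|P|}+\sqrt{|N|} \leq 2\sqrt{n}$ would cost a spurious factor of $2$, so one must use the Cauchy--Schwarz bound $\sqrt{2n}$ to land on the stated constant $2\lambda^2/(\alpha^2 p^2 n)$.
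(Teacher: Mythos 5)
Your argument is correct, and it is worth noting that the paper itself does not prove this lemma at all: it simply imports it as ``essentially Proposition~6.2 of~\cite{mossel2013majority}'' with the remark that the regularity assumption can be dropped. What you have written is a sound, self-contained reconstruction of exactly the kind of argument that citation hides: let $D$ be the dissenter set and $P,N$ the two opinion classes at time~$t$; the membership condition $i\in D$ gives $E(D,N)\geq E(D,P)$ after summing the per-vertex inequalities (and your parenthetical about the tie-breaking convention checks out, since the adjusted sum is an odd integer $\leq -1$ and the self-contribution shifts it by at most one, so the unadjusted count still satisfies $|\neigh{i}\cap N|\geq|\neigh{i}\cap P|$); then the two opposite-direction applications of the mixing hypothesis sandwich these counts and yield $p\sqrt{|D|}\,(|P|-|N|)\leq\lambda(\sqrt{|P|}+\sqrt{|N|})\leq\lambda\sqrt{2n}$, which squares to precisely the stated constant $2\lambda^2/(\alpha^2p^2n)$. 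The one thing your write-up buys beyond the paper is that it makes explicit why regularity is never needed --- the mixing hypothesis is stated directly in terms of $p|A||B|$ rather than $\frac{d}{n}|A||B|$, so degrees never enter --- which is exactly the ``slight modification'' the authors allude to but do not spell out. No gaps.
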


To use this, we'll also need the following claim that follows easily
from a result of Vu on the spectra of $\gnp{n}{p}$~\cite{vu2005spectral}.
\begin{lemma}
  For $G \sim \gnp{n}{p}$ with $p \gg (\log n)^4/n$ and $n$ high
  enough, the Expander Mixing Lemma holds for~$G$ with $\lambda =
  4\sqrt{np}$, except with probability at most $o(1)$.
\end{lemma}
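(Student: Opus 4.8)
The plan is to recognize the stated inequality as the Expander Mixing Lemma in its spectral form, and then to bound the relevant spectral norm using Vu's theorem on the spectra of random matrices. Let $\mathcal{A}$ denote the adjacency matrix of $G$, let $J$ be the all-ones $n \times n$ matrix, and let $\mathrm{Id}$ be the identity. For subsets $A, B \subseteq V$ with indicator vectors $\mathbf{1}_A, \mathbf{1}_B \in \R^n$, one has
\[
   E(A,B) - p\lvert A\rvert\lvert B\rvert = \mathbf{1}_A^\top (\mathcal{A} - pJ)\,\mathbf{1}_B,
\]
since $\mathbf{1}_A^\top \mathcal{A}\,\mathbf{1}_B = E(A,B)$ and $\mathbf{1}_A^\top J\,\mathbf{1}_B = \lvert A\rvert\lvert B\rvert$. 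The operator-norm (Cauchy--Schwarz) bound then gives
\[
   \bigl\lvert \mathbf{1}_A^\top (\mathcal{A} - pJ)\,\mathbf{1}_B \bigr\rvert \leq \|\mathcal{A} - pJ\|\cdot\|\mathbf{1}_A\|\cdot\|\mathbf{1}_B\| = \|\mathcal{A} - pJ\|\,\sqrt{\lvert A\rvert\lvert B\rvert}.
\]
Thus it suffices to show that $\|\mathcal{A} - pJ\| \leq 4\sqrt{np}$ with probability $1 - o(1)$; the stated inequality then holds simultaneously for \emph{all} $A, B$, which is what we need.

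Next I would strip off the diagonal discrepancy. Since $\mathcal{A}$ has zero diagonal while $pJ$ has $p$ on its diagonal, write $\mathcal{A} - pJ = M - p\,\mathrm{Id}$, where $M = \mathcal{A} - p(J - \mathrm{Id}) = \mathcal{A} - \E{\mathcal{A}}$ is the \emph{centered} adjacency matrix. By the triangle inequality $\|\mathcal{A} - pJ\| \leq \|M\| + p$, and since $p \leq \sqrt{np}$ this additive correction is harmless. It therefore remains to bound $\|M\|$. Now $M$ is symmetric, its entries above the diagonal are independent, mean-zero, bounded by $1$ in absolute value, with variance $p(1-p) \leq p$, and its diagonal vanishes. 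This is precisely the setting of Vu's theorem~\cite{vu2005spectral}: with variance parameter $\sigma^2 = p$ and entrywise bound $K = 1$, the hypothesis $\sigma \gg K n^{-1/2}(\log n)^2$ reads $\sqrt{np} \gg (\log n)^2$, i.e.\ $p \gg (\log n)^4/n$, which is exactly our standing assumption. Under it, Vu's theorem yields, with probability $1 - o(1)$,
\[
   \|M\| \leq 2\sigma\sqrt{n} + O\bigl((K\sigma)^{1/2} n^{1/4}\log n\bigr) = 2\sqrt{np} + O\bigl((np)^{1/4}\log n\bigr).
\]
The error term is $o(\sqrt{np})$ exactly when $(np)^{1/4} \gg \log n$, i.e.\ again when $np \gg (\log n)^4$. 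Hence whp $\|M\| = (2 + o(1))\sqrt{np}$, and combining with $\|\mathcal{A} - pJ\| \leq \|M\| + p \leq (2 + o(1))\sqrt{np}$, we conclude $\|\mathcal{A} - pJ\| \leq 4\sqrt{np}$ for all sufficiently large $n$, as required.

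The only genuinely delicate point is the quantitative input from Vu's theorem: one must verify that its hypotheses apply to the bounded, independent, mean-zero entries of $M$ and, crucially, that the error term it produces is $o(\sqrt{np})$ precisely in the regime $p \gg (\log n)^4/n$ — so that the threshold in the lemma is the one dictated by the spectral estimate rather than an artifact. Everything else — the spectral reformulation of the edge discrepancy, the diagonal correction, and absorbing lower-order terms into the constant $4$ — is routine. If one wished to avoid invoking the sharp leading constant, a cruder trace-moment bound on $\E{\mathrm{tr}(M^{2k})}$ would also give $\|M\| = O(\sqrt{np})$ in the same range, which already suffices for $\lambda = 4\sqrt{np}$.
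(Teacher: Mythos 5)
Your proposal is correct and follows essentially the same route as the paper: express $E(A,B)-p|A||B|$ as a quadratic form, bound it by the spectral norm of the deviation matrix via Cauchy--Schwarz, and control that norm with Vu's theorem in the regime $p \gg (\log n)^4/n$. The only (immaterial) difference is in handling the diagonal: you center the off-diagonal entries and absorb the resulting $-p\,\mathrm{Id}$ term by the triangle inequality, whereas the paper adds independent Bernoulli$(p)$ self-loops so that Vu's theorem applies verbatim and then subtracts the diagonal contribution $1_A^\top D 1_B \leq |A\cap B|$ at the level of the quadratic form; both corrections are of order at most $\sqrt{np|A||B|}$ and are swallowed by the constant $4$.
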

\begin{proof}
  Let $P_0$ be the (random) adjacency matrix of $G$, and let $P$ be
  given by $P = P_0+D$, where $D$ is a random diagonal matrix whose
  each diagonal entry is one with probability $p$ and zero
  otherwise. $P$ can be thought of as the adjacency matrix of a graph
  $G'$ which is obtained from $G$ by adding each self-loop with
  probability $p$.

  Let $Q$ be the $n \times n$ matrix whose entries are all equal to
  $p$.  Then, since $p \gg (\log n)^4/n$, by~\cite{vu2005spectral} it
  holds with high probability that
  \begin{align*}
    | P-Q | \leq 3\sqrt{np},
  \end{align*}
  where $| \cdot |$ is here the $L^2$ operator (spectral)
  norm. Equivalently, for any two vectors $v,w \in \R^n$,
  \begin{align}
    \label{eq:spectral-norm}
    |v^\top(P-Q)w| \leq 3\sqrt{np} \cdot |v||w|.
  \end{align}

  Let $A,B \subset V$ be any two subsets of vertices. Then the number
  of edges between $A$ and $B$ is given by
  \begin{align*}
    E(A,B) = 1_A^\top P_01_B = 1_A^\top(P-D)1_B = 1_A^\top P1_B-1_A^\top D1_B.
  \end{align*}
  Now, $1_A^\top D1_B$ is at most $|A \cap B|$ and therefore, for $n$ high
  enough it holds that $1_A^\top D1_B \leq \sqrt{np|A||B|}$.
  
  By~\eqref{eq:spectral-norm},
  \begin{align*}
    |1_A^\top P1_B - 1_A^\top Q1_B| \leq 3\sqrt{np}|1_A||1_B| = 3\sqrt{np|A||B|}.
  \end{align*}
  Since $1_A^\top Q1_B = p|A||B|$, with high probability
  \begin{align*}
    \Big\lvert E(A,B) - p|A||B| \Big\rvert =
    |1_A^\top P1_B - 1_A^\top D1_B - 1_A^\top Q1_B|
    \leq 4\sqrt{np|A||B|}.
  \end{align*}
  \qedhere
\end{proof}
Combining the previous two lemmas with Proposition~\ref{prop:1-sided-cheb} we obtain:
\begin{proposition}                                     \label{prop:almost-done-high-degree}
    Assume $n \geq n_0$ and $p \geq \frac{c}{\sqrt{n}}$, where $n_0, c > 0$ are sufficiently large universal constants. Then with probability at least $.4003$ we have
    \[
        \#\{i \in V : \ops{i}{2} \neq \sgn(\mean{0})\} \leq \frac{c}{p^2}.
    \]
\end{proposition}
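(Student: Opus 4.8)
The plan is to feed the ``constant chance of a large global mean at time~$1$'' coming from Proposition~\ref{prop:1-sided-cheb} into the expander lemma (Lemma~\ref{lem:mnt-expander}), using the spectral lemma to guarantee that the relevant graph is, with high probability, a good enough expander. Concretely, define two events: the \emph{mean} event $\mathcal{A} = \{\sgn(\mean{0})\mean{1} \geq .006\sqrt{p}\}$, and the \emph{expander} event $\mathcal{B}$ that the Expander Mixing Lemma holds for $G$ with $\lambda = 4\sqrt{np}$. On the intersection $\mathcal{A} \cap \mathcal{B}$ I will apply Lemma~\ref{lem:mnt-expander} at time $t=1$ to control the number of dissenting vertices at time~$2$.

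First I would check the hypothesis of the spectral lemma. Since $p \geq c/\sqrt{n}$ and $(\log n)^4/n = o(1/\sqrt{n})$, we have $p \gg (\log n)^4/n$ once $n \geq n_0$, so the spectral lemma applies and $\P{\mathcal{B}} \geq 1 - o(1)$. Proposition~\ref{prop:1-sided-cheb} gives $\P{\mathcal{A}} \geq .4004$. Because these two events are not independent, I would bound the intersection by a union bound on the complements:
\[
    \P{\mathcal{A} \cap \mathcal{B}} \geq 1 - \P{\overline{\mathcal{A}}} - \P{\overline{\mathcal{B}}} \geq .4004 - o(1) \geq .4003,
\]
the last inequality holding for all $n \geq n_0$ after enlarging $n_0$ if necessary. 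Note that on $\mathcal{A}$ the inequality $\sgn(\mean{0})\mean{1} \geq .006\sqrt{p} > 0$ forces both $\sgn(\mean{1}) = \sgn(\mean{0})$ and $|\mean{1}| \geq .006\sqrt{p}$, which is exactly what is needed to invoke the expander lemma at the correct sign.

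On $\mathcal{A} \cap \mathcal{B}$ I would apply Lemma~\ref{lem:mnt-expander} with $t = 1$, $\alpha = .006\sqrt{p}$, and $\lambda = 4\sqrt{np}$. Since $\ops{i}{2} \neq \sgn(\mean{0}) = \sgn(\mean{1})$ is the same event as $\ops{i}{2} = -\sgn(\mean{1})$, the lemma yields
\[
    \#\{i \in V : \ops{i}{2} \neq \sgn(\mean{0})\} \leq \frac{2\lambda^2}{\alpha^2 p^2 n} = \frac{2 \cdot 16 np}{(.006)^2 p \cdot p^2 n} = \frac{32}{(.006)^2}\cdot\frac{1}{p^2}.
\]
Since $32/(.006)^2$ is a universal constant, choosing the constant $c$ in the statement at least this large (the symbol $c$ being reused here for both the threshold on $p$ and the constant in the conclusion, as is customary) completes the argument.

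I expect no genuinely hard step here: the real content is already packaged in the three cited results. The only points requiring care are the bookkeeping of quantifiers and constants. First, the events $\mathcal{A}$ and $\mathcal{B}$ must be combined by a union bound rather than by multiplying probabilities, since the large-mean event and the expansion event both depend on the same random graph and initial opinions. Second, the $o(1)$ failure probability of the spectral lemma must be absorbed into the drop from $.4004$ to $.4003$, which is only legitimate once $n$ exceeds a (possibly enlarged) threshold $n_0$. The final algebraic reduction of $2\lambda^2/(\alpha^2 p^2 n)$ to a constant multiple of $1/p^2$ is purely mechanical, using $\lambda^2 = 16np$ and $\alpha^2 = (.006)^2 p$, and introduces no further probabilistic input.
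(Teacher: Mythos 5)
Your proposal is correct and matches the paper's intended argument: the paper itself proves this proposition simply by stating ``Combining the previous two lemmas with Proposition~\ref{prop:1-sided-cheb} we obtain,'' and your write-up supplies exactly that combination, with the union bound over the mean and expander events and the substitution $\lambda = 4\sqrt{np}$, $\alpha = .006\sqrt{p}$ into Lemma~\ref{lem:mnt-expander} carried out correctly. The resulting constant $32/(.006)^2$ is absorbed into the universal constant $c$, just as the statement allows.
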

In $\gnp{n}{p}$ (with $p \gg 1/\sqrt{n}$, say), almost surely each vertex has degree at least~$(p/2)n$, which in turn exceeds~$2c/p^2$ if~$p > (4c/n)^{1/3}$.  Thus we may conclude:
\begin{theorem}    \label{thm:very-high-degree-in-time-3}
    Assume $n \geq n_0$ and $p \geq cn^{-1/3}$, where $n_0, c > 0$ are sufficiently large universal constants. Then with probability at least~$.4$ over the choice of $G \sim \gnp{n}{p}$ and the initial opinions, the vertices unanimously hold opinion $\sgn(\mean{0})$ at time~$3$.
\end{theorem}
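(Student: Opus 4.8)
The plan is to combine Proposition~\ref{prop:almost-done-high-degree} with a routine lower bound on the minimum degree of $G \sim \gnp{n}{p}$, exactly as sketched in the paragraph preceding the theorem. Write $\sigma = \sgn(\mean{0})$ and let $S = \{i \in V : \ops{i}{2} \neq \sigma\}$ be the set of dissenters at time~$2$. Proposition~\ref{prop:almost-done-high-degree} tells us that, provided $n \geq n_0$ and $p \geq c_0/\sqrt{n}$ for a suitable universal constant $c_0$, we have $|S| \leq c_0/p^2$ with probability at least $.4003$. The goal is to upgrade ``almost all agree at time~$2$'' into ``all agree at time~$3$''.

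The key observation is that a vertex $i$ holds opinion $\sigma$ at time~$3$ as soon as fewer than half of its neighbors lie in $S$, since then the majority of $(\ops{j}{2})_{j \in \neigh{i}}$ equals $\sigma$. As all of $S$ together has size at most $c_0/p^2$, the number of dissenting neighbors of any fixed $i$ is trivially at most $c_0/p^2$. Hence it suffices to guarantee that every vertex has degree strictly greater than $2c_0/p^2$. To this end I would record the standard fact that, with probability $1 - o(1)$, every vertex of $G \sim \gnp{n}{p}$ has degree at least $\tfrac12 pn$: each degree is distributed as $\mathrm{Bin}(n-1,p)$ with mean $\approx pn$, which in our range satisfies $pn \geq c n^{2/3} \gg \log n$, so a Chernoff bound gives failure probability $e^{-\Omega(pn)}$ per vertex and a union bound over the $n$ vertices leaves $o(1)$ in total.

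Now $\tfrac12 pn > 2c_0/p^2$ precisely when $p^3 > 4c_0/n$, i.e.\ when $p > (4c_0/n)^{1/3}$; choosing the constant $c$ in the hypothesis $p \geq c n^{-1/3}$ large enough (in terms of $c_0$) forces both this inequality and $p \geq c_0/\sqrt{n}$. Intersecting the two events --- the dissenter bound $|S| \leq c_0/p^2$ (probability $\geq .4003$) and the minimum-degree bound (probability $1 - o(1)$) --- they hold simultaneously with probability at least $.4003 - o(1) \geq .4$ once $n \geq n_0$. On this joint event every vertex $i$ has at most $c_0/p^2 < \tfrac14 pn \leq \tfrac12 |\neigh{i}|$ dissenting neighbors at time~$2$, so each vertex adopts $\sigma$ at time~$3$, giving unanimity at value $\sigma = \sgn(\mean{0})$ as claimed.

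I do not expect a substantial obstacle here: the real content has already been established in Proposition~\ref{prop:almost-done-high-degree}, and this theorem is essentially its corollary. The only mildly delicate points are the bookkeeping of the two universal constants --- the $c_0$ coming from Proposition~\ref{prop:almost-done-high-degree} versus the $c$ in the statement --- and verifying that the minimum-degree concentration is valid throughout the range $p \geq c n^{-1/3}$, which it is, since then $pn \gg \log n$.
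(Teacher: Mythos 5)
Your argument is correct and is essentially identical to the paper's own (very brief) justification: combine Proposition~\ref{prop:almost-done-high-degree} with the whp minimum-degree bound $\deg(i) \geq \tfrac12 pn$, and note that $\tfrac12 pn > 2c/p^2$ once $p > (4c/n)^{1/3}$, so no vertex can have a dissenting majority in its neighborhood at time~$2$. Nothing further is needed.
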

In case $n^{-1/2} \ll p \lesssim n^{-1/3}$ we need an extra time period.  By assuming $p \gg n^{-1/2}$, the right-hand side in Proposition~\ref{prop:almost-done-high-degree} can be made smaller than any desired positive constant.  Then applying the two lemmas again we obtain:
\begin{proposition}                                     \label{prop:really-almost-done-high-degree}
    Assume $n \geq n_0$ and $p \geq \frac{c}{\sqrt{n}}$, where $n_0, c > 0$ are sufficiently large universal constants. Then with probability at least $.4002$ we have
    \[
        \#\{i \in V : \ops{i}{3} \neq \sgn(\mean{0})\} \leq \frac{c}{p}.
    \]
\end{proposition}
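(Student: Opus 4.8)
The plan is to bootstrap from Proposition~\ref{prop:almost-done-high-degree}: a bound of $c/p^2$ on the number of dissenters at time~$2$ forces the global mean $\mean{2}$ to lie within $O(1/(p^2 n))$ of $\pm 1$, so that $|\mean{2}|$ is bounded below by an absolute constant. This lets me invoke Lemma~\ref{lem:mnt-expander} a second time, now at time $t=2$ with the parameter $\alpha$ taken to be a constant rather than the $\Theta(\sqrt{p})$ that was available at time~$1$. Since the dissenter bound produced by the lemma is $2\lambda^2/(\alpha^2 p^2 n) = \Theta(1/(\alpha^2 p))$ when $\lambda = \Theta(\sqrt{np})$, replacing $\alpha = \Theta(\sqrt{p})$ by $\alpha = \Theta(1)$ is precisely what improves the bound from $c/p^2$ to $c/p$.

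Concretely, by the symmetry of the dynamics under a global sign flip I may assume $\sgn(\mean{0}) = +1$. I would work on the event $\mathcal{E}$ underlying Proposition~\ref{prop:almost-done-high-degree}, namely the event of Proposition~\ref{prop:1-sided-cheb} intersected with the event that the Expander Mixing Lemma holds for $G$ with $\lambda = 4\sqrt{np}$; this event has probability at least $.4003$, and on it $\#\{i : \ops{i}{2} \neq +1\} \leq c/p^2$. Writing $k \leq c/p^2$ for this count, every vertex contributes $+1$ or $-1$ to the mean, so
\begin{align*}
  \mean{2} \geq \frac{(n-k)-k}{n} = 1 - \frac{2k}{n} \geq 1 - \frac{2c}{p^2 n}.
\end{align*}
From $p \geq c/\sqrt{n}$ we get $p^2 n \geq c^2$, whence $2c/(p^2 n) \leq 2/c < \tfrac12$ once $c$ is large, giving $\mean{2} \geq \tfrac12$; in particular $\sgn(\mean{2}) = +1 = \sgn(\mean{0})$.

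Now I would apply Lemma~\ref{lem:mnt-expander} at time $t=2$ with $\alpha = \tfrac12$, which is legitimate because the Expander Mixing Lemma holds on $\mathcal{E}$ with $\lambda = 4\sqrt{np}$ and because $\mean{2} \geq |\alpha|$. No further probability is spent, since this is a deterministic consequence on $\mathcal{E}$. The lemma yields
\begin{align*}
  \#\{i : \ops{i}{3} \neq \sgn(\mean{0})\} = \#\{i : \ops{i}{3} = -\sgn(\mean{2})\} \leq \frac{2\lambda^2}{\alpha^2 p^2 n} = \frac{2\,(4\sqrt{np})^2}{(1/2)^2\, p^2 n} = \frac{128}{p} \leq \frac{c}{p},
\end{align*}
where the first equality uses $\sgn(\mean{2}) = \sgn(\mean{0})$ and the last absorbs the constant $128$ into $c$. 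As all of this holds on $\mathcal{E}$, the stated bound holds with probability at least $.4003$, in particular at least the claimed $.4002$, for $n \geq n_0$.

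The substance of the argument is the one-line passage from ``few dissenters at time~$2$'' to ``$\mean{2}$ is a constant bounded away from $0$,'' and I do not expect a serious obstacle. The two points that require care are both bookkeeping: first, one must use that the Expander Mixing Lemma is a deterministic property of the sampled graph, so re-applying Lemma~\ref{lem:mnt-expander} at time~$2$ costs no additional failure probability beyond what was already charged in Proposition~\ref{prop:almost-done-high-degree}; and second, one must confirm the sign consistency $\sgn(\mean{2}) = \sgn(\mean{0})$, which holds because the dissenters form a strict minority of the vertices once $c$ is large.
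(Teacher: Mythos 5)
Your proposal is correct and is exactly the argument the paper intends: the paper dispatches this proposition in one line (``the right-hand side in Proposition~\ref{prop:almost-done-high-degree} can be made smaller than any desired positive constant\ldots then applying the two lemmas again''), and your write-up simply fills in the details of that plan --- converting the $c/p^2$ dissenter bound at time~$2$ into $|\mean{2}| \geq 1/2$ via $p^2 n \geq c^2$, and re-invoking Lemma~\ref{lem:mnt-expander} with constant $\alpha$ at no extra probability cost. The bookkeeping (sign consistency of $\mean{2}$, the deterministic nature of the mixing property, and the constant $128 \leq c$) all checks out.
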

Now we can finish as in the case of $p \gg n^{-1/3}$; we get:
\begin{theorem*}[\ref{thm:gnp-agreement}]
    Assume $n \geq n_0$ and $p \geq cn^{-1/2}$, where $n_0, c > 0$ are sufficiently large universal constants. Then with probability at least~$.4$ over the choice of $G \sim \gnp{n}{p}$ and the initial opinions, the vertices unanimously hold opinion $\sgn(\mean{0})$ at time~$4$.
\end{theorem*}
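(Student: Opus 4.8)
The plan is to finish in exactly the same way as the passage from Proposition~\ref{prop:almost-done-high-degree} to Theorem~\ref{thm:very-high-degree-in-time-3}, but starting one time step later, from Proposition~\ref{prop:really-almost-done-high-degree}. Write $s = \sgn(\mean{0})$ and let $S = \{i \in V : \ops{i}{3} \neq s\}$ be the set of minority vertices at time~$3$. By Proposition~\ref{prop:really-almost-done-high-degree}, as soon as $p \geq c/\sqrt{n}$ and $n \geq n_0$ we have $|S| \leq c/p$ with probability at least $.4002$. Note that this bound holds across the entire range $p \geq cn^{-1/2}$ claimed by the theorem, so no case analysis on the size of $p$ is required: the single argument below handles both $cn^{-1/2} \leq p \lesssim n^{-1/3}$ and the larger values already covered by Theorem~\ref{thm:very-high-degree-in-time-3}.

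Next I would control the minimum degree of $G \sim \gnp{n}{p}$. Each vertex degree is distributed as $\text{Bin}(n-1,p)$, with mean $\approx pn$, so a Chernoff bound gives $\P{\deg(i) < (p/2)n} \leq e^{-\Omega(pn)}$. Since $p \geq c/\sqrt{n}$ forces $pn \geq c\sqrt{n} \gg \log n$, a union bound over the $n$ vertices shows that \emph{every} vertex has degree at least $(p/2)n$, with probability $1 - o(1)$. I would then intersect this event with the event of Proposition~\ref{prop:really-almost-done-high-degree}; absorbing the $o(1)$ loss into the slack between $.4002$ and $.4$, both events hold simultaneously with probability at least $.4$ once $n \geq n_0$.

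On this joint event the finish is deterministic: no vertex can hold the minority opinion at time~$4$. Indeed, for a fixed vertex $i$, all of its neighbors outside $S$ carry opinion $s$ at time~$3$, so the only way to obtain $\ops{i}{4} = -s$ is for a strict majority of $i$'s neighbors to lie in $S$, i.e.\ for more than $\tfrac12\deg(i) \geq (p/4)n$ of them to be in $S$. But the number of $i$'s neighbors in $S$ is at most $|S| \leq c/p$, and $c/p < (p/4)n$ precisely when $p^2 > 4c/n$. Choosing the threshold constant in the hypothesis $p \geq cn^{-1/2}$ large enough makes this inequality hold, so $S$ is far too small to flip any vertex and $\ops{i}{4} = s = \sgn(\mean{0})$ for every $i$; this is the claimed unanimity at time~$4$.

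Given the machinery already assembled, this last step is bookkeeping rather than the crux — the genuine work lies upstream in Propositions~\ref{prop:time-1-mean}--\ref{prop:1-sided-cheb} and the expander-mixing estimates. The one point demanding care is the management of constants: the symbol $c$ plays three distinct roles here, namely the size bound $c/p$ on the minority, the threshold $cn^{-1/2}$ on $p$, and the implicit constant in the degree concentration, and these must be fixed in the correct dependency order (first the universal constant supplied by Proposition~\ref{prop:really-almost-done-high-degree}, then a large enough $c$ in the hypothesis to guarantee $p^2 > 4c/n$, then a large enough $n_0$) so that simultaneously the success probability stays above $.4$ and the ``minority cannot flip anyone'' inequality is satisfied. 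I would also verify that the tie-breaking convention is harmless here, since the relevant inequality $|S| < \tfrac12\deg(i)$ is strict and leaves no tie to resolve.
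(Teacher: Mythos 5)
Your proposal is correct and matches the paper's own (very terse) finish: the paper likewise deduces Theorem~\ref{thm:gnp-agreement} from Proposition~\ref{prop:really-almost-done-high-degree} by the same minimum-degree argument used to pass from Proposition~\ref{prop:almost-done-high-degree} to Theorem~\ref{thm:very-high-degree-in-time-3}, with the threshold $(p/2)n > 2c/p$ giving exactly your condition $p^2 n > 4c$. You have simply written out the details that the paper leaves implicit.
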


As a final remark, when $p = o(1)$ we can (with slightly more effort) improve the probability bound of~$.4$ to any constant smaller than $2/\pi \approx .6366$ by using~\eqref{eqn:arcsin}.

\section{Majority dynamics on $\rrg{n}{d}$}

\begin{proposition}
  \label{prop:disagreement}
  Let $G_n$ be drawn from $\rrg{n}{4}$, or be a sequence of
  $4$-regular expanders with growing girth. Choose the initial
  opinions independently with probability $1/3 < p < 2/3$. Then, with
  high probability, unanimity is not reached at any time.
\end{proposition}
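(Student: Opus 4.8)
The plan is to show that, with high probability, the \emph{initial} configuration already contains two vertex-disjoint \emph{frozen monochromatic cycles}: a cycle all of whose vertices have opinion $+1$, which then keeps opinion $+1$ at every time, and a second cycle that keeps opinion $-1$ at every time. A $+1$ vertex and a $-1$ vertex are then present simultaneously at every time $t\ge 0$, so unanimity can never be reached.

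First I would record the freezing lemma. For a $4$-regular graph the tie-breaking convention adds a self-loop, so $\ops{v}{t+1}$ is the majority of the five values $\ops{v}{t}$ and $\{\ops{j}{t}\}_{j\in\neigh v}$. If $C$ is a simple cycle and $\ops{v}{t}=+1$ for all $v\in C$, then for each $v\in C$ the five values relevant to $v$ include $v$ itself and its two neighbours on $C$, all equal to $+1$; this is a strict majority, so $\ops{v}{t+1}=+1$ no matter what the two off-cycle neighbours do. Applying this to every $v\in C$ at once and inducting on $t$ shows the whole cycle stays $+1$ forever, and symmetrically for $-1$. Thus it suffices to exhibit an initially all-$+1$ cycle and an initially all-$-1$ cycle.

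The crux is the existence of these monochromatic cycles. Since the opinions are i.i.d., the subgraph $G_+$ spanned by the initially-$+1$ vertices is exactly site percolation on $G_n$ with retention probability $p$, and I must show $G_+$ contains a cycle whp (the $-1$ case is identical with $p$ replaced by $1-p$). When $p>\tfrac12$ this is easy: the expected numbers of retained vertices and edges are $pn$ and $2p^2n$, both concentrated, and $2p^2n>pn$, so whp $G_+$ has more edges than vertices and hence a cycle. The delicate regime is $\tfrac13<p\le\tfrac12$. Here I would use that $G_n$ is locally tree-like with local limit the $4$-regular tree --- automatic whp for $\rrg n4$, where short cycles are asymptotically Poisson and touch only $o(n)$ vertices, and guaranteed by the growing-girth hypothesis in the expander case. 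On the $4$-regular tree site percolation is supercritical precisely when $p>\tfrac1{d-1}=\tfrac13$, so the exploration of the $+1$-cluster of a vertex dominates, until its first self-intersection, a Galton--Watson process with offspring law $\mathrm{Bin}(3,p)$ of mean $3p>1$. Since the cluster lives in a finite graph, a cluster that never self-intersects is a finite tree, i.e.\ the exploration went extinct; equivalently, on the positive-probability event that the branching process survives, the exploration must eventually revisit an already-explored vertex, producing a cycle. This yields a monochromatic cycle through a fixed vertex with probability bounded below by the survival probability $1-\eta>0$.

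I expect the main obstacle to be upgrading this constant lower bound to a high-probability statement, the difficulty being that under growing girth the cycle-closing collisions happen only at diverging scale, so the ``cycle through $v$'' events cannot be localised to disjoint constant-radius balls. I would handle this by combining independence across $\Omega(n)$ pairwise-far vertices --- whose $+1$-clusters survive to a large constant radius, an event depending on disjoint vertex sets and hence producing $\Omega(n)$ large $+1$-clusters by concentration --- with the expander hypothesis, which forbids the retained subgraph from being a union of many disjoint trees: a spanning forest of the $\approx pn$ retained vertices would have too few edges and too many sparsely connected pieces to be compatible with the expansion of $G_n$. For $\rrg n4$ an alternative is a second-moment computation on the number of monochromatic cycles of length $\Theta(\log n)$, whose expectation grows because $(d-1)p=3p>1$ beats the per-cycle cost $p^\ell$. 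Either way, the two survival requirements $p>\tfrac13$ and $1-p>\tfrac13$ are exactly the hypothesis $\tfrac13<p<\tfrac23$; granting them, $G_+$ and $G_-$ each contain a cycle whp, these cycles are frozen at opposite opinions for all time and are disjoint, so both opinions persist at every $t\ge0$ and unanimity is never reached.
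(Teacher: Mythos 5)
Your overall strategy coincides with the paper's: both proofs observe that a monochromatic cycle is frozen under the $5$-vote rule (a vertex on the cycle agrees with itself and its two cycle-neighbours, a strict majority of its five votes), and both reduce the problem to showing that supercritical site percolation at retention probability $p>1/(d-1)=1/3$ (resp.\ $1-p>1/3$) contains a cycle with high probability. The gap is in the step you yourself flag as the main obstacle: upgrading ``a monochromatic cycle exists with probability bounded away from $0$'' to ``with high probability.'' Your first proposed fix --- that expansion of $G_n$ ``forbids the retained subgraph from being a union of many disjoint trees'' --- does not hold as stated. Expansion is a property of $G_n$, not of the induced subgraph on a random $p$-fraction of the vertices, and the edge/vertex count is consistent with a forest throughout the hard regime: the retained subgraph has about $pn$ vertices and about $2p^2n$ edges, and $2p^2n\le pn$ for all $p\le 1/2$, so neither the counting nor the existence of $\Omega(n)$ large clusters contradicts the forest hypothesis. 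Note that at least one of the two colours always has retention probability at most $1/2$, so this hard regime cannot be avoided. Your second fix, a second-moment computation over cycles of length $\Theta(\log n)$, is plausible for $\rrg{n}{4}$ but is only sketched and does not cover arbitrary $4$-regular expanders with growing girth, where the number of cycles of a given length need not be large.

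The paper closes exactly this gap with a two-round sprinkling argument. First, by Alon--Benjamini--Stacey, $(p-\epsilon)$-percolation (with $p-\epsilon>1/3$) has a unique giant component of linear size. If it already contains a cycle, one is done. If it is a tree, remove an edge splitting it into parts $A$ and $B$, each of size at least a quarter of the tree; since $A$ and $B$ have size $\Theta(n)$ and $G_n$ is an expander, there are $\Theta(n)$ disjoint paths of bounded length between $A$ and $B$ in $G_n$, and the independent $\epsilon$-sprinkling opens many of them with high probability, creating cycles. Grafting this argument (or a fully executed second-moment computation in the $\rrg{n}{4}$ case) into your write-up would complete the proof; everything else in your proposal matches the paper.
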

Note that Theorem~\ref{thm:disagreement} is rephrasing of
this proposition.
\begin{proof}
  Consider a growing sequence of $d$-regular expanders with girth
  growing to infinity, denoted by
  $G_n$. By~\cite{alon2004percolation}, $p$-Bernoulli percolation will
  contains a unique giant component of size proportional to $G_n$,
  provided $p > 1/(d-1)$. The same holds for random $d$-regular
  graphs.  In particular if $d=4$ and $ 1/3 < p < 2/3$ an open giant
  component and a closed giant component will coexist.

  Since $d=4$ is even, we take majorities over the four neighbors and
  the vertex itself, as we explain above.  To show that with high
  probability unanimity is not reached on these graphs, it is enough
  then to show that the giant component of $1/2$-Bernoulli percolation
  contains cycles. The opinions on the cycles will not change in the
  process of majority dynamics, since each node will have three
  (including itself) neighbors on the cycle with which it agrees.

  To see this, perform the percolation in two stages: first carry out
  $(p - \epsilon)$-Bernoulli percolation (such that $1/3 < p-\epsilon$),
  and then sprinkle on top of it an independent $\epsilon$-Bernoulli
  percolation. If the first percolation already contains a cycle we
  are done. Otherwise the giant component is a tree.

  Pick an edge of the random giant tree that splits the tree to two
  parts, so that each part has size at least $1/4$ of the tree.
  Denote these two parts by $A$ and $B$.  As in the uniqueness proof
  of~\cite{alon2004percolation}, since $G_n$ is an expander and $A$
  and $B$ has size proportional to the size of $G_n$. there are order
  $\Theta(n)$ disjoint paths of length bounded by a function depending only
  on the expansion.  Thus the $\epsilon$-sprinkling connects $A$ and
  $B$ with order $\Theta(n)$ disjoint open paths, creating many cycles with
  probability tending to $1$ with $n$, and we are done.
\end{proof}

\bibliography{more_maj}
\end{document}